\numberwithin{equation}{section}
\newtheorem{theorem}{Theorem}[section]
\newtheorem{lemma}{Lemma}[section]
\newtheorem{remark}{Remark}[section]
\begin{document}

\title{\textbf{Logistic damping effect in chemotaxis models with density-suppressed motility}}
\author{Wenbin Lyu \thanks{%
School of Mathematical Sciences, Shanxi University, Taiyuan 030006, P.R. China; lvwenbin@sxu.edu.cn. }  \and Zhi-An Wang
\thanks{%
Department of Applied Mathematics, The Hong Kong Polytechnic University, Hong Kong; mawza@polyu.edu.hk. }}
\date{}
\maketitle

{\noindent\bf Abstract.}
This paper is concerned with a parabolic-elliptic chemotaxis model with density-suppressed motility and general logistic source in an $n$-dimensional smooth bounded domain with Neumann boundary conditions. Under the minimal conditions for the density-suppressed motility function, we explore how strong the logistic damping can warrant the global boundedness of solutions, and further establish the asymptotic behavior of solutions on top of the conditions.
\vspace{0.2cm}

{\noindent\bf Keywords}: Chemotaxis, Density-suppressed motility, Logistic source, Global boundedness, Asymptotic behavior
\vspace{0.2cm}

{\noindent\bf MSC[2020]}: 35A01, 35B35, 35B45, 35B51, 35Q92.


\section{Introduction and main results}

To explain the aggregation phase of Dictyostelium discoideum cells in response to the secreted chemical signal cyclic adenosine monophosphate, Keller and Segel \cite{K-S1971JTB} proposed the following well-known system
\begin{equation}\label{28}
\begin{cases}
u_t=\nabla\cdot(\gamma (v)\nabla u-u\phi(v)\nabla v),&x\in\Omega,\,t>0,\\
\tau v_t=\Delta v-v+u,&x\in\Omega,\,t>0,
\end{cases}
\end{equation}
in 1971, where $\Omega\subset\mathbb{R}^n\,(n\geq1)$ is a bounded domain with smooth boundary, $u$ denotes the cell density and $v$ is the concentration of the chemical signal emitted by cells. The parameter $\tau\in\{0,1\}$ represents the
relaxation time, that is, the rate of the time scale of $v$ relative to $u$. $\gamma(v)$ and $\phi(v)$ are motility functions representing diffusive and chemotactic coefficients, respectively, and both of them depend on the chemical signal concentration linked through the following relation
$$\phi(v)=(\alpha-1)\gamma'(v)$$
and $\alpha$ denotes the ratio of effective body length (i.e. distance between receptors) to step size. Of particular interest is the case $\alpha=0$, namely the distance between receptors is zero, where the chemotaxis occurs because of an undirected effect on activity due to the presence of a chemical sensed by a single receptor (cf. \cite [p.228]{K-S1971JTB}), the above system is reduced to
\begin{equation}\label{5}
\begin{cases}
u_t=\Delta(\gamma (v)u),&x\in\Omega,\,t>0,\\
\tau v_t=\Delta v-v+u,&x\in\Omega,\,t>0.
\end{cases}
\end{equation}

When $\gamma(v)$ and $\phi(v)$ are constant, the model \eqref{28} is called the minimal chemotaxis model (cf. \cite{Nanjundiah}), which has been extensively studied in the literature. For a broad overview on various types of chemotactic processes and relevant mathematical results, we refer the readers to the survey papers \cite{H-P2009JMB,H.D.2003,T.S.2005,T.S.2018} and references therein.
In contrast to abundant results obtained for the minimal chemotaxis system, the progresses made to the original Keller-Segel model \eqref{28} with non-constants $\gamma(v)$ and $\phi(v)$ are very limited.

To describe the stripe pattern formation of bacterial movement observed in the experiment of \cite{L.2011S}, a so-called density-suppressed motility model as below  was proposed in \cite{L.2011S} (see the supplemental material) and formally analyzed in \cite{F.2012PRL}
\begin{equation}\label{OR-2}
\begin{cases}
u_t=\Delta(\gamma(v)u)+\mu u(1-u),&x\in\Omega, t>0,\\
\tau v_t=\Delta v+u-v,& x\in \Omega, t>0,
\end{cases}
\end{equation}
where the parameter $\mu>0$ denotes the intrinsic cell growth rate, and $\gamma'(v)<0$ accounting for the repressive effect of chemical signal concentration on the cell motility (cf. \cite{L.2011S}). Of interest is that \eqref{OR-2} with $\mu=0$ coincides with the simplified Keller-Segel model \eqref{5}. Indeed the density-suppressed motility mechanism has been previously used to model other biological processes, such as  the predator-prey system describing  the inhomogeneous co-existence distributions of ladybugs (predators) and aphids (prey) populations in the field (cf. \cite{J-W2021EJAM,K-O1987TAN,W-X2021JMB}).
Under the condition $\gamma'(v)<0$ (i.e. density-suppressed motility),   the main challenge of the analysis lies in the possible diffusion degeneracy since $\gamma(v)$ could have no positive lower bound. Therefore many conventional methods for reaction-diffusion equations or chemotaxis models are inapplicable. The dynamics of the above-mentioned models with density-suppressed motility have not been well understood until recently.  Below we shall give a brief review of existing results on \eqref{28}, \eqref{5} and \eqref{OR-2} and then raise our question to explore.  In what follows, we shall always assume the homogenous Neumann boundary conditions unless otherwise stated.

{\bf Case of $\mu>0$}. It was first shown in \cite{J-K-W2018SIAM} when the motility function $\gamma(v)$ satisfies
$\gamma(v)\in C^3([0,+\infty))$, $\gamma(v)>0,\ \gamma'(v)<0\ \mathrm{on}\ [0,\infty)$, $\lim\limits_{v \to +\infty}\gamma(v)=0$ and $\lim\limits_{v \to +\infty}\frac{\gamma'(v)}{\gamma(v)}$ exists, system \eqref{OR-2} has a unique global classical solution in two dimensions ($n=2$) which globally asymptotically converges to the equilibrium $(1,1)$  if
$\mu>\frac{K_0}{16}$ with $K_0=\max\limits_{0\leq v \leq +\infty}\frac{|\gamma'(v)|^2}{\gamma(v)}$. Later, similar results were extended to higher dimensions ($n\geq 3$) for large $\mu>0$ in \cite{L-X2019JMAA} and \cite{W-W2019JMP}. The condition that $\lim\limits_{v \to +\infty}\frac{\gamma'(v)}{\gamma(v)}$ exists imposed in \cite{J-K-W2018SIAM} was recently removed in \cite{F-J2020JDE,J-W2021DCDSB} for the parabolic-elliptic case model (i.e. $\tau=0$). On the other hand, for small $\mu>0$, the existence/{\color{black}nonexistence} of nonconstant steady states  of \eqref{OR-2} was rigorously established in \cite{M-P-W2020PD,X-W2021IMAJAM} in appropriate parameter regimes and the periodic pulsating wave is analytically obtained by the multi-scale analysis.

{\bf Case of $\mu=0$}. The analysis for the case $\mu=0$ was much more delicate due to the loss of logistic damping.  It was first shown in \cite{T-W2017MMMAS} that globally bounded solutions exist in two dimensions if the motility function $\gamma(v)$ has both positive lower and upper bounds. However, $\gamma(v)$ may not have priori positive lower/upper bound, for example, $\gamma(v)=\frac{c_0}{v^k}$, for which it was proved in \cite{Y-K2017AAM} that global bounded solutions exist in all dimensions for any $k>0$ provided that $c_0>0$ is small. The smallness of $c_0$ is later removed in \cite{A-Y2019N} for the parabolic-elliptic case model (i.e. $\tau=0$ in \eqref{OR-2}) for $0<k<\frac{2}{(n-2)_+}$. The global existence of weak solutions of \eqref{OR-2} with large initial data was established in \cite{D-K-T-Y2019NARWA} for $\gamma(v)=\frac{1}{c+v^k}$ with $c\geq 0,\ k>0$ and $n=1$ or $k\in(0,2)$ and $n=2$ or $k\in(0,\frac{3}{4})$ and $n=3$. When $\gamma(v)=e^{-\chi v}$, a critical mass phenomenon was identified in \cite{J-W2020PAMS}:  if $n=2$, there is a critical number $m=4\pi/\chi>0$ such that the solution of \eqref{OR-2} with $\tau=1$ may blow up if the initial cell mass $\|u_0\|_{L^1(\Omega)}>m$, while global bounded solutions exist if $\|u_0\|_{L^1(\Omega)}<m$. This result was further refined in \cite{F-J2020JDE,F-J2021CVPDE} showing that the blowup occurs at the infinity time. Meanwhile, it was proved in \cite{F-J2020pre,Z.A.W.2021MMAS,W-Z2021AAM} that global bounded classical solutions exist in all dimensions with some additional conditions on $\gamma(v)$. Recently global weak solutions of \eqref{OR-2} in all dimensions and the blow-up of solutions of \eqref{OR-2} in two dimensions were investigated in \cite{B-L-T2021JLMS}.

We mention that an addition to the two-component density suppressed motility model \eqref{OR-2},  a three-component density-suppressed motility model was also proposed in  \cite{L.2011S} and has been studied recently in \cite{J-L-Z2021pre,J-S-W2020JDE,Lyu-W, X-W2021CVPDE}. Except the studies on the bounded domain with zero Neumann boundary conditions, there are some results obtained in the whole space $\mathbb{R}$. When $\gamma(v)$ is a piecewise constant function, the dynamics of discontinuity interface of \eqref{OR-2} was studied in \cite{S-I-K2019EJAM} and  discontinuous traveling wave solutions were constructed in \cite{L-N2019DCDSB}.  Recently the existence of traveling wavefronts of \eqref{OR-2}  was established in \cite{L-W2021JDE} where the periodic traveling waves were also numerically illustrated.

The present work is motivated in the following way. When $\mu=0$, the solution of \eqref{OR-2} (i.e. the model \eqref{5})  may blow up in two dimensions with the motility function $\gamma(v)$ decaying exponentially (cf. \cite{J-W2020PAMS,F-J2020JDE,F-J2021CVPDE}), while the blowup is immediately prevented by the quadratic logistic damping $\mu u(1-u)$ with $\mu>0$ (cf. \cite{J-K-W2018SIAM, F-J2020JDE,J-W2021DCDSB}). Hence we ask how strong the logistic damping is actually adequate to preclude the blowup of solutions for the density-suppressed motility function satisfying only the basic (minimal) assumptions in multi-dimensions. This question amounts to consider the system \eqref{OR-2} by replacing  $\mu u(1-u)$ with $\mu(u-u^\sigma)$ and explore how large the damping exponent $\sigma>0$ can ensure the global boundedness of solutions.
Therefore we are motivated to consider the following system
\begin{equation}\label{1}
\begin{cases}
u_{t}=\Delta(\gamma(v)u)+au-bu^\sigma, & x \in \Omega, \quad t>0, \\
-\Delta v+v=u, & x \in \Omega, \quad t>0,\\
\frac{\partial u}{\partial \nu}=\frac{\partial v}{\partial \nu}=0, & x \in \partial \Omega, \quad t>0, \\
u(x, 0)=u_{0}(x), & x \in \Omega,
\end{cases}
\end{equation}
where constants $a>0,\ b>0,\ \sigma>1$ and $\nu$ is the unit outer normal vector of $\partial\Omega$.

We imposed the minimal structural assumptions on the density-suppressed motility function
\begin{equation}\label{11}
\gamma\in C^3([0,+\infty)),\ \gamma(v)>0,\ \gamma'(v)<0 \quad \text{on}\ [0,+\infty),
\end{equation}
and assume the initial value $u_0$ satisfies
\begin{equation}\label{29}
u_0\in C^0(\overline{\Omega}),\ u_0\geq0\ \text{and}\ u_0\not\equiv0.
\end{equation}
Then we shall investigate under what conditions on the damping exponent $\sigma$ associated with the dimension $n$, the global boundedness of solutions to \eqref{1} is ensured,  and further find the asymptotic behavior of solutions as time tends to infinity.

For the convenience of presentation, we let
\begin{equation}\label{63}
Q:=Q(b,\sigma)=\left\|(-\Delta+I)^{-1}u_0\right\|_{L^\infty(\Omega)}+\frac{\sigma-1}{\gamma(0)b^{\frac{1}{\sigma-1}}}\left(\frac{a+2\gamma(0)}{\sigma}\right)^{\frac{\sigma}{\sigma-1}},
\end{equation}
and $G:=G(p)>0$ be a constant satisfying the following Gagliardo-Nirenberg inequality
  $$\|\nabla v\|_{L^{2(p+1)}(\Omega)} \leq G(p)\|v\|_{W^{2, p+1}(\Omega)}^{\frac12}\|v\|_{L^{\infty}(\Omega)}^{\frac12}.$$
By the Agmon-Douglis-Nirenberg regularity theorem (cf. \cite{Agmon1, Agmon2}) applied to the elliptic equation $-\Delta v+v=u$, we find a constant $R:=R(p)>0$ such that
  $$\|v\|_{W^{2,p}(\Omega)}\leq R(p)\|u\|_{L^p(\Omega)}.$$
We write
  \begin{equation}\label{65}
  \kappa:=\kappa(p,b,\sigma)=\left( G^{2(p+1)}(p)R^{p+1}(p)Q^{p+1}(b,\sigma)+1\right)\sup_{0\leq v\leq Q(b,\sigma)}\frac{|\gamma'(v)|^2}{\gamma(v)}
  \end{equation}
  and
  \[b_1:=\frac{\left[\frac n2\right]\kappa\left(\left[\frac n2\right]+1,1,2\right)}{2}+1.\]
Then our main results are stated as follows.

\begin{theorem}\label{th4}
Let $\Omega\subset \mathbb{R}^n$ be a bounded domain with smooth boundary. Assume $\gamma(v)$ satisfies the hypothesis \eqref{11} and the initial value satisfies the condition \eqref{29}. Then If one of the following conditions holds
\begin{enumerate}
  \item[(1)] $n\leq2$, $\sigma>1$;
  \item[(2)] $n\geq3$, $\sigma>2$;
  \item[(3)] $n\geq3$, $\sigma=2$ and $b>b_1$,
\end{enumerate}
then there exists a couple
$(u,v)$ of non-negative functions
$$u\in C^0(\overline{\Omega}\times[0,+\infty))\cap C^{2,1}(\overline{\Omega}\times(0,+\infty)),\quad v\in C^{2,1}(\overline{\Omega}\times(0,+\infty)),$$
which solves \eqref{1} in the classical sense. Moreover, the solution of \eqref{1} is uniformly bounded in $\Omega\times(0,+\infty)$; namely, there exists a constant $C>0$ independent of $t$ such that
\begin{equation*}
\|u(\cdot,t)\|_{L^\infty(\Omega)}+\|v(\cdot,t)\|_{W^{1,\infty}(\Omega)}\leqslant C\quad\text{for all}\ t>0.
\end{equation*}
\end{theorem}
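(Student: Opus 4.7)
The plan is the standard three-stage program: produce a maximal classical solution on $[0,T_{\max})$ with an extensibility criterion, establish a uniform-in-time $L^\infty$-bound on $v$, and bootstrap an $L^p$-energy estimate for $u$ up to an $L^\infty$-bound that closes the extensibility criterion. Amann-type parabolic theory applied to the quasilinear scalar equation of $u$ coupled with elliptic regularity for $v$ yields local well-posedness in the claimed regularity class, non-negativity of $u$ and $v$ via the strong maximum principle (using $u_0\geq 0$, $u_0\not\equiv 0$, and $\gamma(v)>0$), and the criterion that $T_{\max}=\infty$ as soon as $\|u(\cdot,t)\|_{L^\infty(\Omega)}$ remains bounded on $(0,T_{\max})$. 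Everything thus reduces to controlling $\|u\|_{L^\infty}$.

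\textbf{Bounding $v$ through the parabolic-elliptic coupling.} Applying the Neumann resolvent $(-\Delta+I)^{-1}$ to the $u$-equation and using the identity $(-\Delta+I)^{-1}\Delta=-I+(-\Delta+I)^{-1}$ transforms it into an evolution equation for $v$,
\[
v_t=-\gamma(v)u+(-\Delta+I)^{-1}\bigl[\gamma(v)u\bigr]+av-b(-\Delta+I)^{-1}u^\sigma.
\]
The Green kernel of $(-\Delta+I)^{-1}$ with Neumann boundary conditions is non-negative and has unit mass in the second variable (since $(-\Delta+I)^{-1}1=1$), so this resolvent is an order-preserving Markov operator. Consequently $(-\Delta+I)^{-1}[\gamma(v)u]\leq\gamma(0)v$ pointwise (from $\gamma(v)\leq\gamma(0)$), while Jensen's inequality applied fibrewise to the convex function $s\mapsto s^\sigma$ gives $(-\Delta+I)^{-1}u^\sigma\geq v^\sigma$. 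At any point realising $\phi(t):=\|v(\cdot,t)\|_{L^\infty}$ we have $\Delta v\leq 0$ and hence $u=-\Delta v+v\geq v$, and these three ingredients turn the evolution equation into a logistic differential inequality of the form $\phi'(t)\leq C_1\phi(t)-C_2\phi(t)^\sigma$ with positive constants $C_1,C_2$ depending only on $a,b,\gamma(0),\sigma$. ODE comparison with the explicit supersolution then produces $\|v(\cdot,t)\|_{L^\infty}\leq Q$ uniformly in $t$, with $Q$ being the quantity defined in \eqref{63}.

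\textbf{$L^p$-estimate on $u$ and bootstrap to $L^\infty$.} With $v\leq Q$, $\gamma(v)$ is trapped between $\gamma(Q)>0$ and $\gamma(0)$ and $|\gamma'(v)|^2/\gamma(v)$ is bounded on $[0,Q]$ by the supremum appearing in $\kappa$. Testing the $u$-equation with $u^{p-1}$, expanding $\Delta(\gamma(v)u)=\nabla\cdot(\gamma(v)\nabla u+\gamma'(v)u\nabla v)$ and absorbing the chemotactic cross term into $\int u^{p-2}\gamma(v)|\nabla u|^2$ via Young's inequality produces, schematically,
\[
\frac{1}{p}\frac{d}{dt}\int_\Omega u^p+b\int_\Omega u^{p+\sigma-1}\leq C(p)\int_\Omega u^p|\nabla v|^2+a\int_\Omega u^p.
\]
The integral $\int u^p|\nabla v|^2$ is controlled by Hölder's inequality with exponents $\tfrac{p+1}{p}$ and $p+1$, followed by the stated Gagliardo-Nirenberg inequality, the elliptic regularity bound $\|v\|_{W^{2,p+1}}\leq R(p+1)\|u\|_{L^{p+1}}$, and the bound $\|v\|_{L^\infty}\leq Q$; this yields a right-hand side of the form $\kappa(p,b,\sigma)\int u^{p+1}+a\int u^p$. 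In cases (1)--(2) the gap $p+\sigma-1>p+1$ (either because $\sigma>2$, or, for $n\leq 2$, via a dimension-tuned choice of Hölder exponents that accommodates any $\sigma>1$) lets Young's inequality absorb $\int u^{p+1}$ into $b\int u^{p+\sigma-1}$, producing uniform $L^p$-bounds for every $p$. In case (3) the exponents $p+1$ and $p+\sigma-1$ coincide; choosing $p=\left[\tfrac{n}{2}\right]+1$ and noting the monotonicity $Q(b,2)\leq Q(1,2)$ (hence $\kappa(p,b,2)\leq\kappa(p,1,2)$) for $b\geq 1$, the absorption of $\kappa\int u^{p+1}$ into $b\int u^{p+1}$ succeeds exactly at the threshold $b>b_1$. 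A uniform $L^{[n/2]+1}$-bound on $u$, combined with the Sobolev embedding $W^{2,[n/2]+1}\hookrightarrow L^\infty$ applied to the elliptic $v$-equation and a Moser iteration (or heat-semigroup smoothing) for the now uniformly parabolic $u$-equation, then upgrades to the sought $L^\infty$-bound on $u$ and the $W^{1,\infty}$-bound on $v$, closing the extensibility criterion.

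\textbf{Main obstacle.} The crux of the whole argument is the $L^\infty$-bound on $v$: with no a priori control on $\|u\|_{L^\infty}$, the only way to exploit the logistic damping at the level of $v$ is to export it from $u^\sigma$ to $v^\sigma$ through Jensen's inequality, which is possible precisely because the parabolic-elliptic structure simultaneously supplies the evolution equation for $v$, the pointwise identity $u=-\Delta v+v$, and the Markov character of $(-\Delta+I)^{-1}$. After that, the $L^p$-step is largely routine, and the only delicate point is the critical case $\sigma=2$, where closing the coefficient comparison in the Gagliardo-Nirenberg/elliptic-regularity pipeline requires precisely the threshold $b>b_1$.
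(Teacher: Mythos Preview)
Your route to the $L^\infty$-bound on $v$ (Jensen for the Markov operator $(-\Delta+I)^{-1}$ plus a logistic ODE comparison at the spatial maximum) is a valid alternative to the paper's, which instead uses the pointwise algebraic estimate $\gamma(v)u+au-bu^\sigma\leq-\gamma(0)u+C_1$ to arrive at the \emph{linear} inequality $v_t+\gamma(0)v\leq C_1$ holding at every point; both yield a uniform bound, though yours does not literally reproduce the constant $Q$ of \eqref{63}, which slightly shifts the numerical threshold in case~(3). Your handling of cases~(2) and~(3) is otherwise essentially correct and parallels the paper.

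The real gap is case~(1) with $n=2$ and $1<\sigma\leq 2$. There the damping exponent satisfies $p+\sigma-1\leq p+1$, so Young's inequality cannot absorb $\int_\Omega u^{p+1}$ into $b\int_\Omega u^{p+\sigma-1}$, and no ``dimension-tuned choice of H\"older exponents'' repairs this: optimising the H\"older/Gagliardo--Nirenberg split of $\int_\Omega u^p|\nabla v|^2$ using only $\|u\|_{L^1}$ and $\|v\|_{L^\infty}$ never brings the right-hand exponent below $p+1$ (the minimum over conjugate pairs is attained precisely at $p+1$). The paper supplies a genuinely additional ingredient that your outline omits. From the pointwise inequality $v_t+\gamma(v)u+\gamma(0)v\leq C$ one multiplies by $u=-\Delta v+v$ and integrates to obtain a uniform bound on $\int_\Omega|\nabla v|^2$ together with the \emph{time-space} estimate $\int_t^{t+\tau}\!\int_\Omega u^2\leq C$. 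Feeding this into the $p=2$ energy inequality via two-dimensional Gagliardo--Nirenberg yields
\[
\frac{d}{dt}\int_\Omega u^2\leq C\Bigl(1+\|u\|_{L^2(\Omega)}^2\Bigr)\int_\Omega u^2+C,
\]
and a \emph{uniform} Gr\"onwall lemma (whose hypothesis is exactly that $\int_t^{t+\tau}\|u\|_{L^2}^2$ is bounded) then converts the time-averaged control into a pointwise-in-time $L^2$-bound on $u$. Without this step the two-dimensional subquadratic case does not close.
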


\begin{remark}\em{
The results of Theorem \ref{th4} indicate that if $\sigma>2$ (strong damping), then the minimal assumption \eqref{11} on the motility function is sufficient to warrant the global boundedness of solution to \eqref{1} in all dimensions. If $1<\sigma<2$ (weak damping), then the  minimal assumption \eqref{11} can guarantee the global boundedness of solutions for $n\leq2$. If $n=2$, $\gamma(v)=e^{-\lambda v}$ and $a=b=0$, a critical mass $m_0=\frac{4\pi}{\lambda}$ exists and solution may blow up at infinite time if the initial cell mass is grater than $m_0$ (cf. \cite{F-J2020JDE,F-J2021CVPDE,J-W2020PAMS}).  Therefore Theorem \ref{th4} entails that the mere logistic damping ($\sigma>1$) can  preclude the blowup in two dimensions, in contrast to the results in \cite{J-K-W2018SIAM, J-W2021DCDSB,F-J2020JDE} with $\sigma=2$. In \cite{L-W2021EECT,L-W2021PRSE}, the global boundedness of solutions to the parabolic-parabolic version of \eqref{1} in all dimensions was obtained for $\sigma>\max\{2,\frac{n+2}{2}\}$. Clearly these results are improved by our results in Theorem \ref{th4} in the parabolic-elliptic case model.
}
\end{remark}


Next, we investigate the large time behavior of solutions to \eqref{1}. For convenience, let
\begin{equation}\label{33}
u_*:=\left(\frac{a}{b}\right)^{\frac{1}{\sigma-1}},\quad\overline{v}:=\frac{1}{|\Omega|}\int_\Omega v,
\end{equation}
\begin{equation}\label{66}
K_1:=\begin{cases}
\frac{(2|\Omega|)^{\frac{\sigma+1}{3-\sigma}}2^{2-\sigma}}{\sigma-1}+\frac{2^{\frac{4}{3-\sigma}}(2|\Omega|)^{2-\sigma}}{1-2^{-(\sigma-1)}},&\text{if}\ \ \sigma<2,\\
1,&\text{if}\ \ \sigma\geq 2,
\end{cases}
\end{equation}
and
\begin{equation}\label{64}
K_2:=\begin{cases}
\xi^2,&\text{if}\ \ \sigma<2,\\
1,&\text{if}\ \ \sigma\geq2,
\end{cases}
\end{equation}
where
$\xi>0$ is a Poincar\'{e} constant satisfying the following inequality
$$\|v-\overline{v}\|_{L^{d}(\Omega)}\leq\xi\|\nabla v\|_{L^2(\Omega)},\quad d\in\left[1,\frac{2n}{(n-2)_+}\right).$$
Then we write
\begin{equation}\label{62}
b_2:=\left(\frac{K_1K_2}{4}\right)^{\frac{\sigma-1}{2}}a^{-\frac{\sigma-3}{2}}\left(\sup_{v\geq0}\frac{|\gamma'(v)|^2}{\gamma(v)}\right)^{\frac{\sigma-1}{2}}.
\end{equation}

\begin{theorem}\label{th3}
Let the assumptions in Theorem \ref{th4} hold.  If one of the following conditions holds
\begin{enumerate}
  \item[(1)] $n\leq2$, $\sigma>1$ and $b>b_2$;
  \item[(2)] $n\geq3$, $\sigma>2$ and $b>b_2$;
  \item[(3)] $n\geq3$, $\sigma=2$ and $b>\max\{b_1,b_2\}$;
  \item[(4)] $n\geq3$, $2-\frac{2}{n}<\sigma<2$, $b>b_2$ and $\gamma(v)$ satisfies $\inf\limits_{v\geq0}\frac{\gamma(v)\gamma''(v)}{|\gamma'(v)|^2}>\frac{n}{2}$,
\end{enumerate}
then the solution of \eqref{1} obtained in Theorem \ref{th4} satisfies
\begin{equation}\label{1.5}
\left\|u(\cdot,t)-u_*\right\|_{L^\infty}\rightarrow0\quad\text{and}\quad\left\|v(\cdot,t)-u_*\right\|_{L^\infty}\rightarrow0\quad\text{as}\quad t\rightarrow+\infty.
\end{equation}
\end{theorem}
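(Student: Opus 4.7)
The strategy is to set up a Lyapunov functional whose dissipation controls $\|u-u_*\|_{L^2(\Omega)}^2$. A natural candidate is the relative entropy
\[
\mathcal{F}(t):=\int_\Omega\left(u-u_*-u_*\ln\frac{u}{u_*}\right)dx,
\]
which is nonnegative, strictly convex in $u$, and vanishes only at $u\equiv u_*$. Testing the first equation of \eqref{1} against $F'(u)=1-u_*/u$ and exploiting the factorization $au-bu^\sigma=-bu(u^{\sigma-1}-u_*^{\sigma-1})$ (valid since $a=bu_*^{\sigma-1}$), I would obtain, after integration by parts,
\[
\frac{d\mathcal{F}}{dt}=-u_*\!\int_\Omega\!\frac{\gamma(v)}{u^2}|\nabla u|^2\,dx-u_*\!\int_\Omega\!\frac{\gamma'(v)}{u}\nabla u\cdot\nabla v\,dx-b\!\int_\Omega(u-u_*)(u^{\sigma-1}-u_*^{\sigma-1})\,dx.
\]
The first term is a manifest dissipation, the third is nonpositive by monotonicity of $t\mapsto t^{\sigma-1}$, and the cross term is the only obstacle.

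Next I would apply Young's inequality pointwise so that the full $|\nabla u|^2$-content of the cross term is absorbed into the first dissipation, leaving the residue $\frac{u_*}{4}\int_\Omega\frac{|\gamma'(v)|^2}{\gamma(v)}|\nabla v|^2\,dx$, which by \eqref{11} is dominated by $\frac{u_*}{4}\bigl(\sup_{v\ge 0}\frac{|\gamma'(v)|^2}{\gamma(v)}\bigr)\|\nabla v\|_{L^2}^2$. The next task is to compare $\|\nabla v\|_{L^2}^2$ with the reaction dissipation. Testing $-\Delta v+v=u$ against $v-u_*$ yields the identity $\|\nabla v\|_{L^2}^2+\|v-u_*\|_{L^2}^2=\int_\Omega(u-u_*)(v-u_*)\,dx$; for $\sigma\ge 2$ I would close this by Cauchy--Schwarz/Young directly (producing $K_2=1$ in \eqref{64}), whereas for $\sigma<2$ I would work instead with $v-\overline v$ and invoke Poincar\'e's inequality, producing the constant $K_2=\xi^2$. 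Finally, one needs to bound $\|u-u_*\|_{L^2}^2$ by $\int_\Omega(u-u_*)(u^{\sigma-1}-u_*^{\sigma-1})$: for $\sigma\ge 2$ pointwise convexity gives the constant $K_1=1$, while for $1<\sigma<2$ one splits $\Omega$ into $\{u\le 2u_*\}$ and $\{u>2u_*\}$, controlling the first region by mean-value estimates and the second by H\"older's inequality, producing the $|\Omega|$-dependent constant $K_1$ defined in \eqref{66}. Assembling everything, the hypothesis $b>b_2$ is precisely what renders the right-hand side coercive, so that
\[
\frac{d\mathcal{F}}{dt}\le -\delta\int_\Omega(u-u_*)^2\,dx
\]
for some $\delta>0$.

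Integrating in time produces $\int_0^\infty\!\!\int_\Omega(u-u_*)^2\,dx\,dt<\infty$. Combining this with the uniform $L^\infty$ bound of Theorem~\ref{th4} and standard parabolic H\"older regularity applied to the first equation of \eqref{1} yields equicontinuity of the family $\{u(\cdot,t)\}_{t\ge 1}$, so a Barbalat-type argument upgrades the space-time integrability to the uniform decay $\|u(\cdot,t)-u_*\|_{L^\infty(\Omega)}\to 0$. The corresponding convergence of $v$ follows immediately by applying $W^{2,p}$-elliptic regularity (with $p>n$) to $-\Delta(v-u_*)+(v-u_*)=u-u_*$ and embedding into $L^\infty(\Omega)$.

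The hard part is case~(4), where $2-2/n<\sigma<2$ with $n\ge 3$. Here the logistic damping is genuinely weak and the naive elliptic bound $\|\nabla v\|_{L^2}\le C\|u-u_*\|_{L^2}$ together with the integrated inequality encoded in $K_1$ is no longer strong enough on its own. The structural hypothesis $\inf_{v\ge 0}\gamma(v)\gamma''(v)/|\gamma'(v)|^2>n/2$ is exactly what is needed to construct an auxiliary entropy of the form $\int_\Omega\psi(v)\,dx$ with $\psi$ chosen so that $\psi'(v)=-\gamma'(v)/\gamma(v)$, in the spirit of \cite{F-J2020JDE,J-W2021DCDSB}; its dissipation supplies the missing control on $\nabla v$ and allows the differential inequality for $\mathcal{F}$ to close throughout the subcritical window $\sigma>2-2/n$, thereby delivering the convergence~\eqref{1.5}.
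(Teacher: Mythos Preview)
Your treatment of cases~(1)--(3) matches the paper's approach closely: the same relative-entropy functional $\mathcal{F}$, the same pointwise Young absorption of the cross term leaving the residue $\tfrac{u_*}{4}\sup\tfrac{|\gamma'|^2}{\gamma}\|\nabla v\|_{L^2}^2$, the same $K_1/K_2$ machinery, and the same upgrade from space--time integrability to $L^\infty$ decay via parabolic H\"older regularity and a compactness/contradiction argument. One technical slip worth flagging: for $\sigma<2$ the reaction dissipation $\int_\Omega(u-u_*)(u^{\sigma-1}-u_*^{\sigma-1})$ does \emph{not} control $\|u-u_*\|_{L^2}^2$. On the set $\{u>2u_*\}$ the integrand behaves like $u^\sigma$, and after invoking the $L^1$ bound from Lemma~\ref{lm8} one only recovers $\|u-u_*\|_{L^q}^2$ with $q=2/(3-\sigma)<2$ (this is the paper's Lemma~\ref{lm12}). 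The paper carries this $L^q$ norm throughout and closes the $\nabla v$-estimate by pairing $(v-\overline v)(u-u_*)$ via H\"older in the dual exponent $q'=2/(\sigma-1)$ followed by the Poincar\'e--Sobolev inequality.

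Your reading of case~(4), however, misidentifies the role of the hypothesis $\inf_{v\ge 0}\gamma(v)\gamma''(v)/|\gamma'(v)|^2>n/2$. In the paper this condition plays \emph{no} part in the asymptotic analysis: the Lyapunov argument is carried out uniformly for all four cases in Lemmas~\ref{lm12}--\ref{lm15}, and the lower threshold $\sigma>2-2/n$ appears precisely because the Poincar\'e--Sobolev step in Lemma~\ref{lm10} requires $q'=2/(\sigma-1)<2n/(n-2)$ when $n\ge 3$. The convexity assumption on $\gamma$ is present only to supply a global bounded classical solution to which the Lyapunov argument applies, since Theorem~\ref{th4} itself does not cover $n\ge 3$ with $\sigma<2$; under that condition on $\gamma$, boundedness is available from existing results (cf.~\cite{A-Y2019N,F-J2020JDE}). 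No auxiliary entropy of the form $\int_\Omega\psi(v)$ is needed to obtain~\eqref{1.5}, and your proposed modification of $\mathcal{F}$ in case~(4) is unnecessary.
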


The key of proving Theorem \ref{th4} is to derive that $v$ has an upper bound to rule out the possible diffusion degeneracy (see section \ref{sec1}). Inspired by an idea from the work \cite{F-J2020JDE}, we use the maximum principle for the inverse operator $(I-\Delta)^{-1}$ to construct a simple differential inequality on $v$ which yields an upper bound of $v$. In deriving the {\it a priori} estimates to obtain the global solutions, the case $\sigma>2$ is easier to handle due to the strong damping effect, while in the case  $1<\sigma\leq2$, we shall further exploit the structure of the equation governing $v$. To study the large time behaviour of solutions, the Lyapunov functional method and Poincar\'{e}'s inequality are essentially used.

The rest of this paper is organized as follows. In Section \ref{sec2}, we first state the local existence of solutions to \eqref{1} with an extensibility condition, and derive a criterion  of global boundedness of solutions. Then we deduce some {\it a priori} estimates to meet the criterion and obtain the global bounded solutions of \eqref{1} in section \ref{sec3}. Finally, we show the large time behaviour of solutions to \eqref{1} in section \ref{sec4}.

\section{Preliminaries}\label{sec2}

This section is devoted to introducing some preliminary results for proving the global boundedness of solutions to \eqref{1}, including the existence of local solutions and extensibility criterion for  \eqref{1} as well as some basic estimates. In the following, we shall use $C_i(i=1,2,\cdots)$ to denote a generic positive constant which may vary in the context. For simplicity, we shall abbreviate $\int_\Omega fdx$ as $\int_\Omega f$ and $\int_0^t\int_\Omega fdxds$ as $\int_0^t\int_\Omega f$ without confusion.

\subsection{Local existence and extensibility criterion}

First, we present the existence of local solutions and extensibility criterion for \eqref{1} which can be proved via the fixed point theorem by slightly modifying the proof of \cite[Lemma 2.1]{J-K-W2018SIAM}.

\begin{lemma}[Local existence]\label{lm5}
Let $\Omega\subset \mathbb{R}^n$ be a bounded domain with smooth boundary, $a>0$, $b>0$, $\sigma>1$ and assume $\gamma(v)$ satisfies the hypothesis \eqref{11}. If the initial data satisfy the condition \eqref{29}, then there exists $T_{max}\in(0,+\infty]$ such that \eqref{1} admits a unique non-negative classical solution $(u,v)$ satisfying
$$u\in C^0(\overline{\Omega}\times[0,T_{max}))\cap C^{2,1}(\overline{\Omega}\times(0,T_{max})),$$
$$v\in C^{2,1}(\overline{\Omega}\times(0,T_{max})).$$
Moreover, if $T_{max}<+\infty$, then
\begin{equation}\label{7}
\limsup\limits_{t\nearrow T_{max}}\|u(\cdot,t)\|_{L^\infty(\Omega)}=+\infty.
\end{equation}
\end{lemma}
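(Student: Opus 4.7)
The strategy I would follow is the standard one for parabolic--elliptic chemotaxis: rewrite the system as a scalar quasilinear problem for $u$, run a Banach fixed point in a small time ball, and then extend by continuation. First, because $-\Delta + I$ with homogeneous Neumann boundary conditions is an isomorphism $L^p(\Omega) \to W^{2,p}(\Omega)$ for every $p \in (1,\infty)$ (Agmon--Douglis--Nirenberg), any $u(\cdot,t) \in L^\infty(\Omega)$ determines $v(\cdot,t) = (-\Delta + I)^{-1} u(\cdot,t) \in W^{2,p}(\Omega) \hookrightarrow C^{1,\alpha}(\overline{\Omega})$ uniquely, so $v$ is slaved to $u$. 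Expanding
\[
\Delta(\gamma(v) u) = \gamma(v)\Delta u + 2\gamma'(v)\nabla v \cdot \nabla u + u \gamma''(v) |\nabla v|^2 + u \gamma'(v) \Delta v
\]
and substituting $\Delta v = v - u$ from the elliptic equation, the first equation of \eqref{1} becomes a scalar quasilinear parabolic equation whose coefficients depend on $u$ only through the smoothing operator $(-\Delta + I)^{-1}$.

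Next I would implement the fixed-point construction. Fix $M > \|u_0\|_{L^\infty(\Omega)} + 1$ and let
\[
\mathcal{B}_T := \left\{ \tilde u \in C^0(\overline{\Omega} \times [0,T]) : \tilde u(\cdot,0) = u_0,\ \|\tilde u\|_{L^\infty(\Omega \times (0,T))} \le M \right\}.
\]
For $\tilde u \in \mathcal{B}_T$ set $\tilde v = (-\Delta + I)^{-1} \tilde u$, which by elliptic regularity lies uniformly in $C^{1,\alpha}(\overline{\Omega})$, then solve the \emph{linear} parabolic problem
\[
u_t = \gamma(\tilde v)\Delta u + 2\gamma'(\tilde v) \nabla \tilde v \cdot \nabla u + \bigl( \gamma''(\tilde v)|\nabla \tilde v|^2 + \gamma'(\tilde v)(\tilde v - \tilde u) + a - b \tilde u^{\sigma - 1} \bigr) u
\]
with Neumann data and initial value $u_0$. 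Under \eqref{11} the leading coefficient satisfies $\gamma(\tilde v) \ge \gamma(\|\tilde v\|_\infty) > 0$, and the lower-order coefficients are Hölder-continuous in $x$ and bounded in $t$, so classical linear parabolic theory yields a unique solution $u \in C^0(\overline{\Omega}\times[0,T]) \cap C^{2,1}(\overline{\Omega}\times(0,T])$. For $T$ sufficiently small (depending on $M$, $\|u_0\|_\infty$, and the structural constants of $\gamma$), standard contraction estimates in the $C^0$ norm, leveraging the smoothing $\tilde u \mapsto \tilde v$, show the map $\Phi : \tilde u \mapsto u$ preserves $\mathcal{B}_T$ and is a contraction; Banach's fixed point theorem produces the desired local solution. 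Non-negativity of $u$ comes from the parabolic maximum principle applied to the linear equation with $u_0 \ge 0,\ \not\equiv 0$, noting that the reaction $au - bu^\sigma$ vanishes at $u = 0$.

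For the extensibility criterion I argue by contradiction: assume $T_{\max} < +\infty$ but $\sup_{t\in[0,T_{\max})} \|u(\cdot,t)\|_{L^\infty(\Omega)} =: M_0 < \infty$. Then $v$ is bounded in $W^{2,p}(\Omega)$ for every $p < \infty$ uniformly in $t \in [0, T_{\max})$, hence uniformly Hölder in $(x,t)$ together with $\nabla v$. The coefficients of the quasilinear equation for $u$ therefore satisfy uniform Hölder bounds on $\overline{\Omega} \times [\tau, T_{\max})$ for any $\tau > 0$, and parabolic Schauder theory (together with Krylov--Safonov for the initial step from $C^0$ data) yields uniform $C^{2+\alpha, 1+\alpha/2}$ estimates up to $t = T_{\max}$. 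Restarting the local existence argument at time $T_{\max}$ then contradicts maximality, forcing \eqref{7}. The main technical point, which is routine but requires care, is the contraction estimate with only $C^0$ initial data: one must exploit the smoothing $\tilde u \in C^0 \mapsto \tilde v \in C^{1,\alpha}$ afforded by $(-\Delta + I)^{-1}$ to give the linearized equation enough regularity for the Hölder parabolic theory to apply, uniformly over $\mathcal{B}_T$.
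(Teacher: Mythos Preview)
Your overall strategy matches the paper's (which defers to \cite{J-K-W2018SIAM} and whose sketch proceeds by a fixed-point argument in $C^0(\overline\Omega\times[0,T])$, freezing $\tilde u$, recovering $\tilde v$ elliptically, and solving a linear parabolic problem). The paper uses the Schauder fixed point theorem rather than Banach, keeps the $u$-equation in divergence form
\[
u_t=\nabla\cdot\bigl(\gamma(\tilde v)\nabla u\bigr)+\nabla\cdot\bigl(\gamma'(\tilde v)\,u\,\nabla\tilde v\bigr)+a\tilde u-b\tilde u^{\sigma},
\]
and treats the reaction as an inhomogeneous forcing; it also truncates $\gamma$ to a bounded $\bar\gamma$ so the ellipticity constant is controlled a~priori.

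There is one genuine technical gap in your version. You assert that ``the lower-order coefficients are H\"older-continuous in $x$'', but in your non-divergence linearization two of those coefficients, namely $\gamma'(\tilde v)(\tilde v-\tilde u)$ and $a-b\,\tilde u^{\sigma-1}$, contain $\tilde u$ directly, and $\tilde u\in\mathcal B_T$ is merely $C^0$. Classical Schauder theory therefore does not deliver the $C^{2,1}$ solution you invoke; and for $\sigma\in(1,2)$ the map $s\mapsto s^{\sigma-1}$ is not Lipschitz at $0$, which also threatens your contraction estimate in $C^0$. The fix is precisely the paper's choice: stay in divergence form and push every $\tilde u$-dependent piece into the inhomogeneous source $a\tilde u-b\tilde u^{\sigma}$. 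Then parabolic $L^p$/De~Giorgi--Nash theory (with $L^\infty$ coefficients and $L^\infty$ forcing) gives existence and a uniform H\"older bound on $u$, which both feeds the self-map/compactness and allows a subsequent bootstrap to $C^{2,1}$ on $(0,T]$. With that modification your argument goes through; the extensibility part is fine as written.
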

Below is an uniform Gr\"{o}nwall inequality \cite{R.T.1988} which will help us to derive the uniform in-time estimates of solutions.

\begin{lemma}\label{lm9}
Let $T_{max}>0$, $\tau\in(0,T_{max})$. Suppose that $c_1,c_2,y$ are three positive locally integrable functions on $(0,T_{max})$ such that $y'$ is locally integrable on $(0, T_{max})$ and satisfies
$$y'(t) \leq c_1(t)y(t)+c_2(t)\ \ \text{for all}\ \ t\in(0,T_{max}).$$
If
$$\int_{t}^{t+\tau} c_1\leq C_{1}, \ \ \int_{t}^{t+\tau} c_2 \leq C_{2}, \ \  \int_{t}^{t+\tau} y \leq C_{3}\ \ \text{for all}\ \ t\in[0, T_{max}-\tau)$$
where $C_{i}(i=1,2,3)$ are positive constants, then
$$y(t) \leq\left(\frac{C_{3}}{\tau}+C_{2}\right) e^{C_{1}}\ \ \text{for all}\ \ t\in[\tau, T_{max}).$$
\end{lemma}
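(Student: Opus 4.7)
The plan is to prove this standard uniform Gr\"{o}nwall inequality by reducing it to an ordinary Gr\"{o}nwall-type estimate over a sliding subinterval of length $\tau$. The key idea is that although we have no pointwise control on $y$ at any distinguished initial time, the integral bound $\int_t^{t+\tau} y \leq C_3$ forces $y$ to be small somewhere in each window of length $\tau$, and we can then propagate that value forward using the differential inequality.

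First, I would rewrite the hypothesis $y'(s) \leq c_1(s) y(s) + c_2(s)$ in integrating-factor form. Setting $F(s) := \exp\!\bigl(-\int_{t_0}^{s} c_1(r)\,dr\bigr)$ for any chosen base point $t_0$, one has $\frac{d}{ds}\bigl(y(s) F(s)\bigr) \leq c_2(s) F(s)$ for a.e.\ $s$, and integrating from $t_0$ to $t$ gives
\begin{equation*}
y(t) \leq y(t_0)\, e^{\int_{t_0}^{t} c_1(r)\,dr} + \int_{t_0}^{t} c_2(s)\, e^{\int_{s}^{t} c_1(r)\,dr}\,ds.
\end{equation*}
This is valid for any $t_0 \in [0,t]$ where the inequality applies. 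The remaining task is to choose $t_0$ wisely.

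Second, fix $t \in [\tau, T_{\max})$ and restrict attention to the window $[t-\tau, t]$. By the mean value theorem for integrals, together with the hypothesis $\int_{t-\tau}^{t} y \leq C_3$, there exists $t_0 \in [t-\tau, t]$ with $y(t_0) \leq C_3/\tau$. Using this choice together with the bounds $\int_{t-\tau}^{t} c_1 \leq C_1$ and $\int_{t-\tau}^{t} c_2 \leq C_2$, I would estimate
\begin{equation*}
e^{\int_{t_0}^{t} c_1} \leq e^{C_1}, \qquad \int_{t_0}^{t} c_2(s)\, e^{\int_{s}^{t} c_1(r)\,dr}\,ds \leq e^{C_1}\!\!\int_{t-\tau}^{t} c_2(s)\,ds \leq C_2\, e^{C_1},
\end{equation*}
which, substituted into the previous display, yields $y(t) \leq \bigl(C_3/\tau + C_2\bigr) e^{C_1}$, as required.

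There is no real obstacle here; the only point demanding a touch of care is the selection of $t_0$, since $y$ is merely locally integrable and need not be continuous, so the statement ``there exists $t_0 \in [t-\tau,t]$ with $y(t_0) \leq C_3/\tau$'' should be justified by the integral mean value inequality (otherwise the bound $y(t_0) \leq C_3/\tau$ holds on a subset of positive measure, which is enough to initiate the argument from some $t_0$ in that subset). Everything else is a direct manipulation of the integrating-factor representation together with the uniform bounds on $\int_{t-\tau}^{t} c_1$ and $\int_{t-\tau}^{t} c_2$.
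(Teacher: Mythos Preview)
Your argument is correct and is precisely the standard proof of the uniform Gr\"{o}nwall inequality. The paper does not supply its own proof of this lemma but simply cites Temam~\cite{R.T.1988}, where the result is established by exactly the integrating-factor-plus-mean-value argument you outline.
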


\subsection{The uniform in-time \texorpdfstring{$L^1$}{L1} boundedness of \texorpdfstring{$u$}{u}}

The uniform in-time $L^1$ boundedness of $u$ below is a basic property of solutions and will pays a role in our analysis.

\begin{lemma}\label{lm3}
Let $(u,v)$ be a solution of \eqref{1}. If $a>0,\,b>0$ and $\sigma>1$ are constants, then there exists a constant $C>0$ such that
\begin{equation}\label{39}
\int_\Omega u\leq C\quad\text{for all}\ t\in(0,T_{max}).
\end{equation}
\end{lemma}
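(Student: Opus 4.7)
The plan is to test the first equation of \eqref{1} against the constant function $1$ and reduce the problem to a scalar ordinary differential inequality for $y(t):=\int_\Omega u(\cdot,t)$. Integrating $u_t=\Delta(\gamma(v)u)+au-bu^\sigma$ over $\Omega$ and using the divergence theorem together with the Neumann conditions $\partial_\nu u=\partial_\nu v=0$ (which make $\nabla(\gamma(v)u)\cdot\nu=\gamma(v)\partial_\nu u+u\gamma'(v)\partial_\nu v$ vanish on $\partial\Omega$), the diffusion term drops out and one gets
\begin{equation*}
y'(t)=a\int_\Omega u-b\int_\Omega u^\sigma.
\end{equation*}

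Next I would use H\"older's inequality in the form $\int_\Omega u\le|\Omega|^{(\sigma-1)/\sigma}\bigl(\int_\Omega u^\sigma\bigr)^{1/\sigma}$, so that $\int_\Omega u^\sigma\ge|\Omega|^{1-\sigma}y^\sigma$. This produces the Bernoulli-type differential inequality
\begin{equation*}
y'(t)\le a\,y(t)-b\,|\Omega|^{1-\sigma}\,y(t)^\sigma,\qquad t\in(0,T_{\max}).
\end{equation*}
Since $\sigma>1$, the superlinear damping term dominates the linear growth once $y$ exceeds the threshold $y_\ast:=\bigl(a/b\bigr)^{1/(\sigma-1)}|\Omega|$; indeed at $y=y_\ast$ the right-hand side vanishes, and for $y>y_\ast$ it is strictly negative. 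A standard ODE comparison argument then yields
\begin{equation*}
y(t)\le\max\bigl\{\|u_0\|_{L^1(\Omega)},\,y_\ast\bigr\}=:C\qquad\text{for all }t\in(0,T_{\max}),
\end{equation*}
which is \eqref{39}.

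There is essentially no obstacle here. The only thing to be careful about is the justification that the boundary flux $\int_{\partial\Omega}\nabla(\gamma(v)u)\cdot\nu$ is zero, which is immediate from the regularity of the classical solution guaranteed by Lemma \ref{lm5} and the homogeneous Neumann conditions on both $u$ and $v$. Everything else is a one-line H\"older estimate plus ODE comparison, so the proof should fit in a few lines.
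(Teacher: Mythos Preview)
Your argument is correct and follows essentially the same route as the paper: integrate the $u$-equation, drop the divergence term via the Neumann conditions, apply H\"older to bound $\int_\Omega u^\sigma$ from below by $|\Omega|^{1-\sigma}\bigl(\int_\Omega u\bigr)^\sigma$, and compare with the Bernoulli ODE to obtain the bound $\max\{\|u_0\|_{L^1},\,(a/b)^{1/(\sigma-1)}|\Omega|\}$. The paper's proof is identical in structure and arrives at the very same constant.
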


\begin{proof}
Integrating the first equation of \eqref{1} over $\Omega$ with the boundary condition, one finds
\begin{equation}\label{2}
\frac{d}{dt}\int_\Omega u\leq a\int_\Omega u-b\int_\Omega u^\sigma\quad\text{for all}\ t\in(0,T_{max}).
\end{equation}
Due to $\sigma>1$ and H\"{o}lder's inequality, we conclude the fact
$$\int_\Omega u\leq|\Omega|^{\frac{\sigma-1}{\sigma}}\left(\int_\Omega u^\sigma\right)^{\frac{1}{\sigma}},$$
which, along with \eqref{2}, implies that
\begin{equation*}
\frac{d}{dt}\int_\Omega u\leq a\int_\Omega u-\frac{b}{|\Omega|^{\sigma-1}}\left(\int_\Omega u\right)^\sigma.
\end{equation*}
Solving this ordinary differential inequality and noticing the positivity of $u$, we have
\begin{equation*}
\int_\Omega u\leq\max\left\{\int_\Omega u_0,\left(\frac{a}{b}\right)^{\frac{1}{\sigma-1}}|\Omega|\right\}.
\end{equation*}
This finishes the proof of \eqref{39}.
\end{proof}

\subsection{The upper bound of \texorpdfstring{$v$}{v}}\label{sec1}

To rule out the possible degeneracy of $\gamma(v)$, we shall derive the upper bound of $v$ in this section. Our method is essentially inspired from the paper \cite{F-J2020JDE}. The following lemma asserts that the
upper bound of $v$ is independent of $T_{max}$.

\begin{lemma}\label{lm1}
Let $(u,v)$ be a solution of \eqref{1}. If $a>0,\,b>0$ and $\sigma>1$ are constants, then there is a constant $C>0$ such that $v$ satisfies
$$v_t+\gamma(v)u+\gamma(0)v\leq C\quad\text{for any}\ \ (x,t)\in\Omega\times(0,T_{max}).$$
Moreover, we have
\begin{equation*}
v\leq Q\quad\text{for any}\ \ (x,t)\in\Omega\times(0,T_{max}),
\end{equation*}
where the constant $Q>0$ is defined in \eqref{63}.
\end{lemma}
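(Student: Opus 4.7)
The plan is to convert the $u$-equation into an elliptic inequality for a carefully chosen auxiliary function built from $v_t$, $\gamma(v)u$ and $v$, exploiting the fact that the operator $-\Delta+I$ with Neumann boundary conditions admits a pointwise comparison principle. This is the idea already hinted at by the authors through their citation of \cite{F-J2020JDE}.

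First, I would differentiate the elliptic identity $-\Delta v + v = u$ in $t$ and substitute the $u$-equation to obtain
$$-\Delta\bigl(v_t + \gamma(v)u\bigr) + v_t + \gamma(v)u = \gamma(v)u + au - bu^\sigma.$$
Adding $\gamma(0)u = \gamma(0)(-\Delta v + v)$ to both sides and setting $w := v_t + \gamma(v)u + \gamma(0)v$ gives
$$-\Delta w + w = \bigl(\gamma(v) + \gamma(0) + a\bigr)u - bu^\sigma.$$
Since $u\geq 0$ forces $v\geq 0$ through the elliptic maximum principle, and $\gamma$ is decreasing by \eqref{11}, one has $\gamma(v)\leq \gamma(0)$. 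Maximizing the one-variable function $(a+2\gamma(0))u - bu^\sigma$ over $u\geq 0$ yields the constant
$$M := \frac{\sigma-1}{b^{1/(\sigma-1)}}\left(\frac{a+2\gamma(0)}{\sigma}\right)^{\sigma/(\sigma-1)},$$
and consequently $-\Delta w + w \leq M$ in $\Omega$. A direct computation using the Neumann conditions on $u$ and $v$ shows $\partial_\nu w = 0$ on $\partial\Omega$.

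The comparison principle for $-\Delta+I$ with Neumann boundary data, which can be obtained by testing $(M-w)$ against its negative part and integrating by parts, then gives $w \leq M$ pointwise. This is the first claimed inequality with $C = M$. Dropping the nonnegative term $\gamma(v)u$ yields the pointwise ODE inequality $v_t + \gamma(0)v \leq M$ at each $x$; integrating in time and using $v(\cdot,0) = (-\Delta+I)^{-1}u_0$ gives
$$v(x,t) \leq \bigl\|(-\Delta+I)^{-1}u_0\bigr\|_{L^\infty(\Omega)}\, e^{-\gamma(0)t} + \frac{M}{\gamma(0)}\bigl(1 - e^{-\gamma(0)t}\bigr) \leq Q.$$

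The main subtlety is the elliptic comparison step: one must verify the Neumann boundary condition for the composite object $w$ and then handle the one-sided inequality $-\Delta w + w \leq M$ (with only a pointwise upper bound on the right) via an energy argument rather than by solving an equation. The remainder is algebraic manipulation and a standard ODE integration.
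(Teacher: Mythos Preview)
Your argument is correct and is essentially the same as the paper's: the paper applies $(I-\Delta)^{-1}$ to the $u$-equation to obtain $v_t+\gamma(v)u=(I-\Delta)^{-1}\{\gamma(v)u+au-bu^\sigma\}$ and then bounds the right-hand side by $-\gamma(0)v+C_1$ via the elliptic comparison principle, which is exactly your construction of $w$ and the inequality $-\Delta w+w\le M$ written at the operator level rather than the PDE level. The only cosmetic difference is that you add $\gamma(0)(-\Delta v+v)$ explicitly and check $\partial_\nu w=0$, whereas the paper absorbs this into the solution operator $(I-\Delta)^{-1}$; the ODE integration step is identical.
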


\begin{proof}
Step 1: We claim $v$ satisfies the following equation
\begin{equation}\label{6}
v_t+\gamma(v)u=(I-\Delta)^{-1}\{\gamma(v)u+au-bu^\sigma\}
\end{equation}
for all $(x,t)\in\Omega\times(0,T_{max})$. Indeed, the first equation of \eqref{1} can be rewritten as
$$u_t=-(I-\Delta)\gamma(v)u+\gamma(v)u+au-bu^\sigma.$$
Taking the operator $(I-\Delta)^{-1}$ on both sides of the above equation and using the second equation of \eqref{1}, we get \eqref{6} directly.

Step 2: By the non-increasing property of $\gamma(v)$ and Young's inequality, we can find a constant $$C_1:=\frac{\sigma-1}{b^{\frac{1}{\sigma-1}}}\left(\frac{a+2\gamma(0)}{\sigma}\right)^{\frac{\sigma}{\sigma-1}}>0$$ such that
\begin{align*}
\gamma(v)u+au-bu^\sigma\leq&\gamma(0)u+au-bu^\sigma\\
=&-\gamma(0)u+\left(a+2\gamma(0)\right)u-bu^\sigma\\
\leq&-\gamma(0)u+C_1,
\end{align*}
which, combined with the comparison principle for elliptic equations, yields
\begin{equation}\label{3}
(I-\Delta)^{-1}\{\gamma(v)u+au-bu^\sigma\}\leq-\gamma(0)v+C_1.
\end{equation}
Inserting \eqref{3} into \eqref{6}, we get
$$v_t+\gamma(v)u+\gamma(0)v\leq C_1.$$

Step 3: Owing to the nonnegativity of $u$ and $\gamma(v)$, it holds that
\begin{equation*}
\gamma(v)u\geq0,
\end{equation*}
which, in conjunction with the result obtained in Step 2, leads to
$$v_t+\gamma(0)v\leq C_1.$$
Therefore, by the Gr\"{o}nwall inequality, this completes the proof of the lemma.
\end{proof}

\subsection{The time-space \texorpdfstring{$L^2$}{L2} boundedness of \texorpdfstring{$u$}{u}}

Thanks to the vital inequality of $v$ obtained in Lemma \ref{lm1}, we have the following improved time-space $L^2$ boundedness of $u$ for $\sigma>1$.

\begin{lemma}\label{lm11}
Let $(u,v)$ be a solution of \eqref{1}. If $a>0,\,b>0$ and $\sigma>1$ are constants, then there exists a constant $C>0$ such that
\[\int_\Omega|\nabla v|^2\leq C\quad\text{for all}\ t\in(0,T_{max})\]
and
\begin{equation*}
\int_t^{t+\tau}\int_\Omega u^2\leq C\quad\text{for all}\ t\in(0,T_{max}-\tau),
\end{equation*}
where $\tau=\min\left\{1,\frac{T_{max}}{2}\right\}$.
\end{lemma}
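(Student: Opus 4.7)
The strategy is to leverage the pointwise differential inequality $v_t+\gamma(v)u+\gamma(0)v\leq C_1$ from Lemma \ref{lm1} together with the elliptic identity $u=v-\Delta v$. The spatial $H^1$-bound for $v$ drops out immediately, while the local-in-time space-time $L^2$-bound for $u$ will follow by pairing the inequality of Lemma \ref{lm1} with $u$ and recognizing that $\int_\Omega u\, v_t$ equals a total time derivative.

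First, testing $-\Delta v+v=u$ against $v$ and using the Neumann condition yields $\int_\Omega |\nabla v|^2+\int_\Omega v^2=\int_\Omega uv$. Since $v\leq Q$ pointwise by Lemma \ref{lm1} and $\int_\Omega u\leq C$ by Lemma \ref{lm3}, the right-hand side is uniformly bounded, giving the first assertion $\int_\Omega |\nabla v|^2\leq C$.

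For the $L^2$-bound of $u$, note that Lemma \ref{lm1} implies the pointwise estimate $\gamma(v)u+v_t\leq C_1$, and since $\gamma$ is decreasing with $v\leq Q$ we have $\gamma(v)\geq \gamma(Q)>0$. Multiplying by $u\geq 0$ and integrating over $\Omega$ gives
\[
\gamma(Q)\int_\Omega u^2+\int_\Omega u\, v_t \;\leq\; C_1\int_\Omega u.
\]
The key identity, obtained by substituting $u=v-\Delta v$, integrating by parts, and using $\partial_\nu v=0$, is
\[
\int_\Omega u\, v_t \;=\; \int_\Omega v\, v_t+\int_\Omega \nabla v\cdot\nabla v_t \;=\; \frac{1}{2}\frac{d}{dt}\int_\Omega\bigl(v^2+|\nabla v|^2\bigr).
\]
Integrating the resulting inequality from $t$ to $t+\tau$, discarding the nonnegative terminal value of $\tfrac12\int_\Omega(v^2+|\nabla v|^2)$, and using the bounds $\int_\Omega u\leq C$, $v\leq Q$, and the first assertion produces $\int_t^{t+\tau}\int_\Omega u^2\leq C$. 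The main (modest) obstacle is spotting this pairing: testing Lemma \ref{lm1} against $u$ and rewriting $\int_\Omega u\, v_t$ via the elliptic equation converts a pointwise inequality into an absorption estimate with the target quantity $\int_\Omega u^2$ on the left, traded against a time derivative that is harmless upon integration over $[t,t+\tau]$.
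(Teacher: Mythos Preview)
Your proof is correct and follows essentially the same route as the paper: both arguments multiply the pointwise inequality $v_t+\gamma(v)u+\gamma(0)v\leq C_1$ from Lemma~\ref{lm1} by $u$ and exploit the identity $\int_\Omega u\,v_t=\tfrac12\tfrac{d}{dt}\int_\Omega(|\nabla v|^2+v^2)$ coming from $u=v-\Delta v$. The only organizational difference is that you obtain the $H^1$-bound on $v$ first by the direct estimate $\int_\Omega(|\nabla v|^2+v^2)=\int_\Omega uv\leq Q\int_\Omega u$, whereas the paper keeps the $\gamma(0)\int_\Omega uv=\gamma(0)\int_\Omega(|\nabla v|^2+v^2)$ term as a damping contribution and applies Gr\"onwall to get both conclusions from a single differential inequality; your shortcut via the pointwise bound $v\leq Q$ is a small simplification but not a different idea.
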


\begin{proof}
Thanks to Lemma \ref{lm1}, we can find a constant $C_1>0$ such that
$$v_t+\gamma(v)u+\gamma(0)v\leq C_1.$$
Then by the second equation of \eqref{1}, integration by parts and Lemma \ref{lm3}, we get
\begin{equation}\label{4}
\begin{aligned}
&\frac{1}{2}\frac{d}{dt}\int_\Omega(|\nabla v|^2+v^2)=\int_\Omega uv_t\\
\leq&-\int_\Omega \gamma(v)u^2-\gamma(0)\int_\Omega uv+C_1\int_\Omega u\\
\leq&-\int_\Omega \gamma(v)u^2-\gamma(0)\int_\Omega(|\nabla v|^2+v^2)+C_2
\end{aligned}
\end{equation}
for some constant $C_2>0$. Therefore, the Gr\"{o}nwall inequality gives a constant $C_3>0$ such that
\[\int_\Omega(|\nabla v|^2+v^2)\leq C_3.\]
Hence, integrating \eqref{4} over $[t,t+\tau]$, we find some constant $C_4>0$ so that
\[\int_t^{t+\tau}\int_\Omega u^2\leq C_4,\]
which gives the desired result.
\end{proof}

\subsection{A criterion  of global existence}

To prove our main result, we shall deduce a criterion  of global boundedness of solutions for the system \eqref{1}. To this end, we first derive an important inequality which will be used in the sequel frequently.

\begin{lemma}\label{lm2}
Let $(u,v)$ be a solution of \eqref{1}. If $a>0,\,b>0$ and $\sigma>1$ are constants, then for any $p\geq\max\{2,\frac{n}{2}-1\}$, one can find constants $C>0$ and $\kappa>0$ which is defined in \eqref{65} such that
$$\begin{aligned}
&\frac{d}{dt}\int_\Omega u^p+\frac{2(p-1)C}{p}\int_\Omega \left|\nabla u^\frac{p}{2}\right|^2+b p\int_\Omega u^{p+\sigma-1}\\
\leq&\frac{p(p-1)\kappa}{2}\int_\Omega u^{p+1}+ap\int_\Omega u^{p}
\quad \text{for all}\ t\in(0,T_{max}).
\end{aligned}$$
\end{lemma}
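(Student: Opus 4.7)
The natural starting point is an $L^p$ energy estimate. I will multiply the first equation of \eqref{1} by $p u^{p-1}$, integrate over $\Omega$ and use the Neumann boundary condition $\partial_\nu(\gamma(v)u) = \gamma(v)\partial_\nu u + \gamma'(v)u\partial_\nu v = 0$ to throw away boundary terms in the integration by parts. This gives
\[
\frac{d}{dt}\int_\Omega u^p = -p(p-1)\int_\Omega \gamma(v) u^{p-2}|\nabla u|^2 - p(p-1)\int_\Omega \gamma'(v) u^{p-1}\nabla u\cdot\nabla v + ap\int_\Omega u^p - bp\int_\Omega u^{p+\sigma-1}.
\]
Using the identities $u^{p-2}|\nabla u|^2 = \tfrac{4}{p^2}|\nabla u^{p/2}|^2$ and $u^{p-1}\nabla u = \tfrac{2}{p}u^{p/2}\nabla u^{p/2}$, this rewrites cleanly in terms of $\nabla u^{p/2}$.

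The next step is to tame the cross term $-2(p-1)\int \gamma'(v) u^{p/2}\nabla u^{p/2}\cdot\nabla v$ by Young's inequality with a weight $\eta>0$, namely
\[
2(p-1)\int_\Omega |\gamma'(v)|\,u^{p/2}|\nabla u^{p/2}|\,|\nabla v| \leq (p-1)\eta\int_\Omega \gamma(v)|\nabla u^{p/2}|^2 + \frac{p-1}{\eta}\int_\Omega \frac{|\gamma'(v)|^2}{\gamma(v)}u^p|\nabla v|^2.
\]
Choosing $\eta = 2/p$ absorbs exactly half of the good diffusion term $\tfrac{4(p-1)}{p}\int \gamma(v)|\nabla u^{p/2}|^2$ and leaves the residual $\tfrac{p(p-1)}{2}\int \tfrac{|\gamma'(v)|^2}{\gamma(v)}u^p|\nabla v|^2$. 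At this point I invoke Lemma~\ref{lm1} so that $v\leq Q$ throughout $\Omega\times(0,T_{\max})$, which by the monotonicity \eqref{11} gives $\gamma(v)\geq C:=\gamma(Q)>0$. This produces precisely the stated differential inequality, provided the last term can be dominated by $\frac{p(p-1)\kappa}{2}\int_\Omega u^{p+1}$.

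The remaining task is to show $\int_\Omega \tfrac{|\gamma'(v)|^2}{\gamma(v)} u^p |\nabla v|^2 \leq \kappa \int_\Omega u^{p+1}$ with $\kappa$ as in \eqref{65}. Since $v\leq Q$, the coefficient is bounded above by $M := \sup_{0\leq v\leq Q}|\gamma'(v)|^2/\gamma(v)$. I then apply Hölder's inequality with exponents $\tfrac{p+1}{p}$ and $p+1$, followed by Young's inequality, to estimate
\[
\int_\Omega u^p|\nabla v|^2 \leq \frac{p}{p+1}\int_\Omega u^{p+1} + \frac{1}{p+1}\int_\Omega |\nabla v|^{2(p+1)} \leq \int_\Omega u^{p+1} + \int_\Omega |\nabla v|^{2(p+1)}.
\]
The Gagliardo--Nirenberg inequality stated in the introduction yields $\|\nabla v\|_{L^{2(p+1)}}^{2(p+1)}\leq G(p)^{2(p+1)}\|v\|_{W^{2,p+1}}^{p+1}\|v\|_{L^\infty}^{p+1}$; plugging in the ADN elliptic estimate $\|v\|_{W^{2,p+1}}\leq R(p)\|u\|_{L^{p+1}}$ and the bound $\|v\|_{L^\infty}\leq Q$ gives $\int |\nabla v|^{2(p+1)} \leq G(p)^{2(p+1)}R(p)^{p+1}Q^{p+1}\int u^{p+1}$. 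Combining, the factor $\bigl(G(p)^{2(p+1)}R(p)^{p+1}Q^{p+1}+1\bigr)M$ is exactly $\kappa$ from \eqref{65}, and multiplying by $\tfrac{p(p-1)}{2}$ yields the claim.

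The main obstacle is bookkeeping rather than analytic depth: the exponent $\eta = 2/p$ in the Young step must be tuned so that the leftover diffusion coefficient is $\tfrac{2(p-1)}{p}$ while the penalty coefficient matches the prefactor $\tfrac{p(p-1)}{2}$ in the target inequality, and one must thread the Hölder/Young split through both GN and ADN in just the right way to reproduce the polynomial structure of $\kappa$ in \eqref{65} (in particular the $+1$ coming from the Young split). The assumption $p\geq\max\{2,n/2-1\}$ enters only to legitimize the $W^{2,p+1}$ elliptic estimate and the Gagliardo--Nirenberg interpolation with exponent $\theta=1/2$.
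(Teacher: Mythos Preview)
Your proof is correct and follows essentially the same route as the paper: test with $pu^{p-1}$, split the cross term by a weighted Young inequality (your choice $\eta=2/p$ is exactly the paper's half-absorption of the diffusion term), then bound $\int u^p|\nabla v|^2$ by Young, Gagliardo--Nirenberg, and the ADN elliptic estimate to recover the constant $\kappa$ in \eqref{65}. Your lower bound $\gamma(v)\ge\gamma(Q)$ is in fact the correct one; the paper writes $\gamma(v)\ge\gamma(0)$ at that point, which is a typo given the monotonicity in \eqref{11}.
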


\begin{proof}
Step 1: We claim that there exist constants $C_1,C_2>0$ such that for any $p\geq2$,
we have
\begin{equation}\label{10}
\begin{aligned}
&\frac{d}{dt}\int_\Omega u^p+\frac{2(p-1)C_1}{p}\int_\Omega \left|\nabla u^\frac{p}{2}\right|^2+b p\int_\Omega u^{p+\sigma-1}\\
\leq&\frac{p(p-1)C_2}{2}\int_\Omega u^p|\nabla v|^2+ap\int_\Omega u^{p}\quad \text{for all}\ t\in(0,T_{max}).
\end{aligned}
\end{equation}
Using $u^{p-1}$ with $p\geq2$ as a test function to test the first equation in \eqref{1}, integrating the result by parts and using Young's inequality, we obtain
\begin{equation*}
\begin{aligned}
&\frac{1}{p}\frac{d}{dt}\int_\Omega u^p+(p-1)\int_\Omega\gamma(v) u^{p-2}|\nabla u|^2+b\int_\Omega u^{p+\sigma-1}\\
=&-(p-1)\int_\Omega\gamma'(v)u^{p-1}\nabla u\cdot\nabla v+a\int_\Omega u^{p}\\
\leq&\frac{p-1}{2}\int_\Omega \gamma(v)u^{p-2}|\nabla u|^2+\frac{p-1}{2}\int_\Omega \frac{|\gamma'(v)|^2}{\gamma(v)}u^p|\nabla v|^2+a\int_\Omega u^{p},
\end{aligned}
\end{equation*}
which, along with the basic fact
$$\left|\nabla u^\frac{p}{2}\right|^2=\frac{p^2}{4}u^{p-2}|\nabla u|^2,$$
yields that
\begin{equation}\label{9}
\begin{aligned}
&\frac{1}{p}\frac{d}{dt}\int_\Omega u^p+\frac{2(p-1)}{p^2}\int_\Omega\gamma(v) \left|\nabla u^\frac{p}{2}\right|^2+b\int_\Omega u^{p+\sigma-1}\\
\leq&\frac{p-1}{2}\int_\Omega \frac{|\gamma'(v)|^2}{\gamma(v)}u^p|\nabla v|^2+a\int_\Omega u^{p}.
\end{aligned}
\end{equation}
In view of hypothesis \eqref{11} and Lemma \ref{lm1}, we can find constants $C_1,\ C_2>0$ such that
$$\gamma(v)\geq\gamma(0)=C_1 \ \ \text{and}\ \ \frac{|\gamma'(v)|^2}{\gamma(v)}\leq C_2.$$
Therefore, \eqref{9} yields \eqref{10}.

Step 2: Now, we estimate the terms on the right hand side of \eqref{10}. First the Young inequality implies
\begin{equation}\label{17}
\int_{\Omega} u^{p}|\nabla v|^{2}\leq \int_{\Omega} u^{p+1}+\int_{\Omega}|\nabla v|^{2(p+1)}.
\end{equation}
In addition since $p\geq\max\{2,\frac{n}{2}-1\}$, we may invoke the Gagliardo-Nirenberg inequality, the regularity theory of elliptic equations and Lemma \ref{lm1} to find some constants $C_{GN},C_R>0$ such that
\begin{align*}
\|\nabla v\|_{L^{2(p+1)}(\Omega)}^{2(p+1)} \leq& G^{2(p+1)}\|v\|_{W^{2, p+1}(\Omega)}^{p+1}\|v\|_{L^{\infty}(\Omega)}^{p+1}\\
\leq& G^{2(p+1)}Q^{p+1}\|v\|_{W^{2, p+1}(\Omega)}^{p+1}\\
\leq& G^{2(p+1)}R^{p+1}Q^{p+1}\|u\|_{L^{p+1}(\Omega)}^{p+1}.
\end{align*}
This along with \eqref{17} updates \eqref{10} as
\begin{align*}
&\frac{d}{dt}\int_\Omega u^p+\frac{2(p-1)C_1}{p}\int_\Omega \left|\nabla u^\frac{p}{2}\right|^2+b p\int_\Omega u^{p+\sigma-1}\\
\leq&\frac{p(p-1)C_2\left( G^{2(p+1)}R^{p+1}Q^{p+1}+1\right)}{2}\int_\Omega u^{p+1}+ap\int_\Omega u^{p}
\end{align*}
which gives the desired result of the lemma.
\end{proof}

The following lemma asserts that, the global solution can be obtained as long as one can find a constant $p_0>\frac{n}{2}$ such that the $L^{p_0}-$norm of $u$ is bounded uniformly in time. The idea of the proof is originally from \cite{Z.A.W.2021MMAS}.

\begin{lemma}\label{lm4}
Let $(u,v)$ be a solution of \eqref{1}. Assume $a>0,\,b>0$ and $\sigma>1$ are constants. If there exist constants $M>0$ and $p_0>\frac{n}{2}$ such that
\begin{equation}\label{20}
\int_{\Omega} u^{p_0} \leq M \ \ \text{for all}\ \ t \in\left(0, T_{max}\right),
\end{equation}
then the solution of the problem \eqref{1} is global, i.e. we can find a constant $C>0$ such that
$$
\|u(\cdot,t)\|_{L^\infty(\Omega)}+\|v(\cdot,t)\|_{W^{1,\infty}(\Omega)}\leq C \ \ \text{for all}\ \ t \in\left(0, T_{max}\right).
$$
\end{lemma}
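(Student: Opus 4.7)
The overall strategy is a Moser-type iteration built on the differential inequality supplied by Lemma~\ref{lm2}. The key preliminary observation is that Lemma~\ref{lm1} already forces the uniform bound $v\le Q$, which together with the monotonicity hypothesis \eqref{11} gives $\gamma(v)\ge\gamma(0)>0$; hence the effective diffusion in the $u$-equation is non-degenerate, and the remaining task is to bootstrap the hypothesis $\|u(\cdot,t)\|_{L^{p_0}}^{p_0}\le M$ (with $p_0>n/2$) up to a uniform $L^\infty$ bound on $u$.

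First I would fix $p\ge\max\{2,\tfrac{n}{2}-1,p_0\}$ and invoke Lemma~\ref{lm2} to obtain
\begin{equation*}
\frac{d}{dt}\int_\Omega u^p+\frac{2(p-1)\gamma(0)}{p}\int_\Omega |\nabla u^{p/2}|^2+bp\int_\Omega u^{p+\sigma-1}\le\frac{p(p-1)\kappa}{2}\int_\Omega u^{p+1}+ap\int_\Omega u^p.
\end{equation*}
Setting $w=u^{p/2}$ and applying the Gagliardo--Nirenberg inequality on $\Omega$ with lower exponent $s=2p_0/p$ and upper exponent $r=2(p+1)/p$ yields
\begin{equation*}
\|w\|_{L^r}^r\le C\|\nabla w\|_{L^2}^{r\theta}\|w\|_{L^s}^{r(1-\theta)}+C\|w\|_{L^s}^r,
\end{equation*}
and a direct computation of the Sobolev interpolation exponent shows $r\theta<2$ precisely because $p_0>n/2$. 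Young's inequality, together with the assumption $\|u\|_{L^{p_0}}^{p_0}\le M$, then absorbs a small multiple of $\int_\Omega |\nabla u^{p/2}|^2$ into the left-hand side. An entirely analogous interpolation controls the $ap\int_\Omega u^p$ term, and one arrives at a differential inequality of the form
\begin{equation*}
\frac{d}{dt}\int_\Omega u^p+C_1(p)\int_\Omega u^p\le C_2(p,M),
\end{equation*}
so that an elementary ODE comparison gives a uniform-in-time bound $\int_\Omega u^p\le C_3(p,M,\|u_0\|_{L^\infty})$ for every admissible $p$.

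To pass from this family of $L^p$ bounds to an $L^\infty$ bound I would iterate along a geometric sequence $p_k\to\infty$, for instance $p_k=2^k\max\{p_0,n\}$, carefully tracking how $C_1(p_k)$, $C_2(p_k,M)$, and the Gagliardo--Nirenberg constant depend on $p_k$; the standard Alikakos--Moser bookkeeping then shows that $\|u(\cdot,t)\|_{L^{p_k}}^{1/p_k}$ remains bounded as $k\to\infty$, which delivers the uniform bound on $\|u(\cdot,t)\|_{L^\infty(\Omega)}$. Once $u\in L^\infty(\Omega\times(0,T_{\max}))$ is established, the Agmon--Douglis--Nirenberg estimate applied to $-\Delta v+v=u$ yields a uniform $W^{2,q}$-bound on $v$ for every $q<\infty$, and Sobolev embedding then provides the desired $W^{1,\infty}$-bound on $v$; this in turn contradicts \eqref{7} and extends the solution globally. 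The main obstacle I anticipate is controlling the $p$-dependence of the constants in the iteration, since the quantity $\kappa=\kappa(p,b,\sigma)$ from \eqref{65} contains the factor $G^{2(p+1)}R^{p+1}Q^{p+1}$ and so grows (at least) geometrically in $p$; one must verify that this geometric growth is tamed by the $1/p_k$ in the final exponent so that the Moser scheme actually closes.
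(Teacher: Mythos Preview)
Your overall strategy is sound, and the first stage---obtaining a uniform $L^p$ bound for each \emph{fixed} $p$ by combining Lemma~\ref{lm2} with Gagliardo--Nirenberg interpolation against the assumed $L^{p_0}$ control---is correct and essentially matches the paper's Step~1. The concern you flag about the $p$-dependence of $\kappa(p,b,\sigma)$, however, is a genuine obstruction rather than a bookkeeping nuisance: since $\kappa$ contains the factor $G(p)^{2(p+1)}R(p)^{p+1}Q^{p+1}$, it grows (at least) geometrically in $p$, and in a Moser recursion of the form $M_k\le C_kM_{k-1}^2$ with $p_k=2p_{k-1}$, a constant $C_k$ of order $A^{p_k}$ contributes a fixed factor $A$ to $M_k^{1/p_k}$ at \emph{every} step, so the infinite product diverges. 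As written, your iteration does not close, and there is no evident way to tame this growth by the $1/p_k$ exponent alone.

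The paper sidesteps the issue by not iterating with the final inequality of Lemma~\ref{lm2} at all. It proceeds in two stages. First, it applies Lemma~\ref{lm2} \emph{once}, at the single exponent $p=2p_0$, and interpolates $\int_\Omega u^{2p_0+1}$ against $\|u^{p_0}\|_{L^1}\le M$ to obtain a uniform bound on $\int_\Omega u^{2p_0}$; since $2p_0>n$, elliptic regularity for $-\Delta v+v=u$ then yields $v\in W^{2,2p_0}\hookrightarrow C^1$, hence $\|\nabla v\|_{L^\infty}\le C$. Second---and this is the key point---it returns to the \emph{intermediate} inequality \eqref{10} inside the proof of Lemma~\ref{lm2}, whose right-hand side carries $\tfrac{p(p-1)C_2}{2}\int_\Omega u^p|\nabla v|^2$ rather than $\tfrac{p(p-1)\kappa(p)}{2}\int_\Omega u^{p+1}$. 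With $|\nabla v|$ now bounded pointwise, this term becomes $Cp(p-1)\int_\Omega u^p$ with a $p$-\emph{independent} constant $C$, and the standard Alikakos--Moser iteration closes immediately. In short, the intermediate step $\nabla v\in L^\infty$ eliminates precisely the geometric growth you were worried about; your fixed-$p$ argument already delivers enough to reach that intermediate step, so you are one observation away from a complete proof.
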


\begin{proof}
Step 1: We claim that there exists a constant $C>0$ such that
\begin{equation*}
\int_{\Omega} u^{2p_0} \leq C \ \ \text{for all}\ \ t \in\left(0, T_{max}\right).
\end{equation*}
Indeed, taking $p=2p_0$ in Lemma \ref{lm2}, we get the following estimate
\begin{equation}\label{19}
\begin{aligned}
&\frac{d}{dt}\int_\Omega u^{2p_0}+\frac{(2p_0-1)C_1}{p_0}\int_\Omega \left|\nabla u^{p_0}\right|^2+2b p_0\int_\Omega u^{2p_0+\sigma-1}\\
\leq& p_0(2p_0-1)C_2\int_\Omega u^{2p_0+1}+2ap_0\int_\Omega u^{2p_0}
\end{aligned}
\end{equation}
for some constants $C_1,C_2>0$. With Young's inequality, we can find a constant $C_3>0$ such that 
$$
\left(1+2ap_0\right)\int_{\Omega} u^{2p_0}\leq 2b p_0\int_{\Omega} u^{2p_0+\sigma-1}+C_3.
$$
Then we have from \eqref{19} that
\begin{equation}\label{21}
\frac{d}{dt}\int_\Omega u^{2p_0}+\int_\Omega u^{2p_0}+\frac{(2p_0-1)C_1}{p_0}\int_\Omega \left|\nabla u^{p_0}\right|^2\leq p_0(2p_0-1)C_2\int_\Omega u^{2p_0+1}+C_3.
\end{equation}
By the Gagliardo-Nirenberg inequality and \eqref{20}, we can find some constants $C_4,C_5>0$ such that
\begin{equation}\label{22}
\begin{aligned}
\int_{\Omega} u^{2p_0+1}=&\left\|u^{p_0}\right\|_{L^{\frac{2p_0+1}{p_0}}(\Omega)}^{\frac{2p_0+1}{p_0}} \leq C_4\left(\left\|u^{p_0}\right\|_{L^{1}(\Omega)}^{\frac{2p_0+1}{p_0}(1-\theta)}\left\|\nabla u^{p_0}\right\|_{L^{2}(\Omega)}^{\frac{2p_0+1}{p_0} \theta}+\left\|u^{p_0}\right\|_{L^{1}(\Omega)}^{\frac{2p_0+1}{p_0}}\right)\\
\leq&C_4\left(M^{\frac{2p_0+1}{p_0}(1-\theta)}\left\|\nabla u^{p_0}\right\|_{L^{2}(\Omega)}^{\frac{2p_0+1}{p_0} \theta}+M^{\frac{2p_0+1}{p_0}}\right)\\
\leq&\frac{C_1}{p_0^2C_2}\int_\Omega \left|\nabla u^{p_0}\right|^2+C_5,
\end{aligned}
\end{equation}
where $\theta=\frac{n}{n+2} \frac{2p_0+2}{2p_0+1} \in(0,1)$ and $\frac{2p_0+1}{2p_0}\theta<1$ due to $p_0>\frac{n}{2}$. Substituting \eqref{22} into \eqref{21}, we obtain
$$\frac{d}{dt}\int_\Omega u^{2p_0}+\int_\Omega u^{2p_0}\leq p_0(2p_0-1)C_2 C_5+C_3$$
which, together with the Gr\"{o}nwall's inequality, proves the claim.

Step 2: Now noticing the estimate in Step 1 and the regularity theory of elliptic equations, we get from the second equation of \eqref{1} that $v \in W^{2,2p_0}(\Omega) \hookrightarrow C^{1,1-\frac{n}{2p_0}}(\Omega)$ by the Sobolev embedding theorem. In other words, there exists a constant $C_6>0$ such that
\begin{equation}\label{40}
\|\nabla v\|_{L^{\infty}(\Omega)} \leq C_6\quad \text{for all}\ t\in\left(0, T_{max}\right)
\end{equation}
which, together with Lemma \ref{lm2}, implies that for any $p\geq2$
$$
\frac{d}{d t} \int_{\Omega} u^{p}+\frac{(p-1)C_7}{p} \int_{\Omega}\left|\nabla u^{\frac{p}{2}}\right|^{2}+bp\int_\Omega u^{p+\sigma-1}\leq C_8p(p-1)\int_{\Omega} u^{p}+ap \int_{\Omega} u^{p}.
$$
with some constants $C_7,C_8>0$. Then, we can employ the  standard Moser iteration (cf. \cite{N.D.A.1979CPDE}) or some similar arguments as in \cite{T-W2013MMMAS} to prove that there exists a constant $C_9>0$ such that
\begin{equation}\label{41}
\|u\|_{L^{\infty}(\Omega)} \leq C_9\quad \text{for all}\ t \in(0, T_{max})
\end{equation}
where the details are omitted here for brevity. Then we complete the proof by using \eqref{40}, \eqref{41} and the extensibility criterion in Lemma \ref{lm5}.
\end{proof}

\section{Global existence: Proof of Theorem \ref{th4}}\label{sec3}

Now, we shall use the boundedness criterion  in Lemma \ref{lm4} to show \eqref{1} admits a global classical solution. If $\sigma>2$, then we use the Young inequality to get the following estimate.

\begin{lemma}\label{lm7}
Let $(u,v)$ be a solution of \eqref{1}. If $a>0,\,b>0$, $\sigma>2$ and $\gamma(v)$ satisfies the hypothesis \eqref{11}, then for any $p\geq\max\{2,\frac{n}{2}-1\}$ there exists a constant $C>0$ such that
$$
\int_{\Omega} u^p \leq C \ \ \text{for all}\ \ t \in\left(0, T_{max}\right).
$$
\end{lemma}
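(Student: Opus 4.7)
The plan is to exploit the super-linear damping term $bp\int_\Omega u^{p+\sigma-1}$ already present on the left-hand side of the key differential inequality in Lemma \ref{lm2}, using the hypothesis $\sigma>2$ to absorb the destabilising term $\frac{p(p-1)\kappa}{2}\int_\Omega u^{p+1}$ on the right-hand side via Young's inequality.

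The first step is to apply Lemma \ref{lm2} for any fixed $p\geq\max\{2,\frac{n}{2}-1\}$ to obtain
\begin{equation*}
\frac{d}{dt}\int_\Omega u^p+bp\int_\Omega u^{p+\sigma-1}\leq\frac{p(p-1)\kappa}{2}\int_\Omega u^{p+1}+ap\int_\Omega u^{p}.
\end{equation*}
Since $\sigma>2$ forces $p+\sigma-1>p+1>p$, each of the exponents $p+1$ and $p$ lies strictly below $p+\sigma-1$. Therefore Young's inequality yields constants $C_1,C_2>0$ (depending on $p$, $a$, $b$, $\kappa$, $\sigma$, $|\Omega|$ only) such that
\begin{equation*}
\frac{p(p-1)\kappa}{2}\int_\Omega u^{p+1}\leq\frac{bp}{3}\int_\Omega u^{p+\sigma-1}+C_1
\end{equation*}
and
\begin{equation*}
(ap+1)\int_\Omega u^{p}\leq\frac{bp}{3}\int_\Omega u^{p+\sigma-1}+C_2.
\end{equation*}
Adding $\int_\Omega u^p$ to both sides and combining these two bounds with the previous differential inequality yields
\begin{equation*}
\frac{d}{dt}\int_\Omega u^p+\int_\Omega u^p+\frac{bp}{3}\int_\Omega u^{p+\sigma-1}\leq C_1+C_2.
\end{equation*}

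The final step is a direct Gr\"onwall-type argument. Dropping the non-negative damping term on the left, we obtain an ODI of the form $y'(t)+y(t)\leq C_1+C_2$ for $y(t):=\int_\Omega u^p$, which gives
\begin{equation*}
\int_\Omega u^p\leq\max\left\{\int_\Omega u_0^p,\,C_1+C_2\right\}\quad\text{for all }t\in(0,T_{max}),
\end{equation*}
as claimed. There is no genuine obstacle here beyond choosing the Young splittings correctly; the strength of the logistic damping in the regime $\sigma>2$ is by itself large enough to beat the $u^{p+1}$ growth produced by the chemotactic cross-term for every admissible $p$, so no further structural information on $\gamma(v)$ beyond the upper bound on $|\gamma'(v)|^2/\gamma(v)$ implicit in $\kappa$ (already guaranteed by Lemma \ref{lm1} and \eqref{11}) is needed.
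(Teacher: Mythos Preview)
Your proof is correct and follows essentially the same route as the paper: apply Lemma~\ref{lm2}, use Young's inequality (enabled by $\sigma>2$) to absorb both $\int_\Omega u^{p+1}$ and $(ap+1)\int_\Omega u^p$ into the damping term $bp\int_\Omega u^{p+\sigma-1}$, and conclude with Gr\"onwall. The only cosmetic difference is that the paper performs a single Young step on the sum $\frac{p(p-1)C_2}{2}\int_\Omega u^{p+1}+(1+ap)\int_\Omega u^p$, while you split it into two with coefficients $\frac{bp}{3}$; the substance is identical.
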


\begin{proof}
By Lemma \ref{lm2}, we get the following estimate
\begin{equation}\label{18}
\frac{d}{dt}\int_\Omega u^p+\frac{2(p-1)C_1}{p}\int_\Omega \left|\nabla u^\frac{p}{2}\right|^2+b p\int_\Omega u^{p+\sigma-1}\leq\frac{p(p-1)C_2}{2}\int_\Omega u^{p+1}+ap\int_\Omega u^{p}
\end{equation}
for some constants $C_1,C_2>0$. An application of the Young inequality gives some constant $C_3>0$ such that
$$\frac{p(p-1)C_2}{2}\int_\Omega u^{p+1}+\left(1+ap\right)\int_\Omega u^{p}\leq b p\int_\Omega u^{p+\sigma-1}+C_3,$$
which, combined with \eqref{18}, yields
$$\frac{d}{dt}\int_\Omega u^p+\int_\Omega u^p\leq C_3.$$
Hence, we can use the Gr\"{o}nwall inequality to prove the claim.
\end{proof}

Similarly, if $\sigma=2$ and $b>0$ is large enough, then the uniform in-time $L^{\left[\frac n2\right]+1}-$norm of $u$ can be obtained.

\begin{lemma}\label{lm16}
Let $(u,v)$ be a solution of \eqref{1}. If $a>0$, $b>b_1$, $\sigma=2$ and $\gamma(v)$ satisfies the hypothesis \eqref{11}, then we can find a constant $C>0$ such that
$$
\int_{\Omega} u^{\left[\frac n2\right]+1} \leq C \ \ \text{for all}\ \ t \in\left(0, T_{max}\right).
$$
\end{lemma}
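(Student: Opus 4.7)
The plan is to apply Lemma \ref{lm2} with the specific choice $p=[\tfrac{n}{2}]+1$ and $\sigma=2$, observe that the ``bad'' term $\int_\Omega u^{p+1}$ on the right and the ``good'' damping term $bp\int_\Omega u^{p+\sigma-1}=bp\int_\Omega u^{p+1}$ on the left are of the same order, and use the hypothesis $b>b_1$ to ensure that the net coefficient of $\int_\Omega u^{p+1}$ is strictly positive. First I would check that $p=[\tfrac{n}{2}]+1$ satisfies $p\geq\max\{2,\tfrac{n}{2}-1\}$, which is clear for $n\geq 3$ (and is the only case of interest here). Plugging into Lemma \ref{lm2} with $\sigma=2$ yields
\begin{equation*}
\frac{d}{dt}\int_\Omega u^p+\frac{2(p-1)C_1}{p}\int_\Omega\bigl|\nabla u^{p/2}\bigr|^2+\left(bp-\frac{p(p-1)\kappa(p,b,2)}{2}\right)\int_\Omega u^{p+1}\leq ap\int_\Omega u^p,
\end{equation*}
for some $C_1>0$, with $p-1=[\tfrac{n}{2}]$.

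The key step is to show that the coefficient $bp-\tfrac{p(p-1)\kappa(p,b,2)}{2}$ is strictly positive. Since $b>b_1\geq 1$, from the definition \eqref{63} of $Q$ (which is decreasing in $b$ for fixed $\sigma=2$) we have $Q(b,2)\leq Q(1,2)$. Combining this with the definition \eqref{65} of $\kappa$, the monotonicity of $[0,Q]\mapsto\sup_{[0,Q]}|\gamma'|^2/\gamma$ and the monotonicity of $Q^{p+1}$ yields the crucial comparison $\kappa(p,b,2)\leq\kappa(p,1,2)=\kappa([\tfrac{n}{2}]+1,1,2)$. Consequently,
\begin{equation*}
b-\frac{(p-1)\kappa(p,b,2)}{2}\geq b-\frac{[\tfrac{n}{2}]\,\kappa([\tfrac{n}{2}]+1,1,2)}{2}>b-b_1+1>1,
\end{equation*}
so that $bp-\tfrac{p(p-1)\kappa(p,b,2)}{2}>p$. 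This is the main obstacle of the argument; everything else is routine.

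With this positivity in hand, I would drop the gradient term and absorb both the remaining $u^p$ term on the right and an additional $\int_\Omega u^p$ by Young's inequality: there is a constant $C_2>0$ such that
\begin{equation*}
(1+ap)\int_\Omega u^p\leq\frac{p}{2}\int_\Omega u^{p+1}+C_2,
\end{equation*}
which, inserted into the differential inequality above, leaves
\begin{equation*}
\frac{d}{dt}\int_\Omega u^p+\int_\Omega u^p+\frac{p}{2}\int_\Omega u^{p+1}\leq C_2.
\end{equation*}
A standard application of Gr\"onwall's inequality then yields $\int_\Omega u^p\leq\max\{\int_\Omega u_0^p,C_2\}$ for all $t\in(0,T_{max})$, which is the claimed bound.
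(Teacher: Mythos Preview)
Your proof is correct and follows essentially the same approach as the paper: apply Lemma~\ref{lm2} with $p=[\tfrac{n}{2}]+1$ and $\sigma=2$, use the largeness of $b$ to dominate the $\kappa$-term, absorb $\int_\Omega u^p$ via Young's inequality, and close with Gr\"onwall. You are in fact more explicit than the paper about the monotonicity step $\kappa(p,b,2)\leq\kappa(p,1,2)$ (via $Q(b,2)\leq Q(1,2)$ for $b\geq 1$), which the paper leaves implicit; one trivial slip is that $b-\tfrac{[\frac{n}{2}]\kappa([\frac{n}{2}]+1,1,2)}{2}=b-b_1+1$ rather than ``$>$'', but this does not affect the argument.
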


\begin{proof}
By Lemma \ref{lm2}, for some $\widetilde{p}=\left[\frac{n}{2}\right]+1$, there exists some constants $C_1>0$ and $\kappa(\widetilde{p},b,2)>0$ defined in \eqref{65} such that
\begin{equation}\label{60}
\frac{d}{dt}\int_\Omega u^{\widetilde{p}}+\frac{2(\widetilde{p}-1)C_1}{\widetilde{p}}\int_\Omega \left|\nabla u^\frac{\widetilde{p}}{2}\right|^2+b \widetilde{p}\int_\Omega u^{\widetilde{p}+1}\leq\frac{\widetilde{p}(\widetilde{p}-1)\kappa(\widetilde{p},b,2)}{2}\int_\Omega u^{\widetilde{p}+1}+a\widetilde{p}\int_\Omega u^{\widetilde{p}}.
\end{equation}
The Young inequality gives a constant $C_2>0$ satisfying
\begin{equation}\label{61}
\left(1+a\widetilde{p}\right)\int_\Omega u^{\widetilde{p}}\leq \widetilde{p}\int_\Omega u^{\widetilde{p}+1}+C_2.
\end{equation}
Taking $b\geq b_1,$ we have from the definition of $\kappa$ that
$$b\geq\frac{(\widetilde{p}-1)\kappa(\widetilde{p},b,2)}{2}+1,$$
which, along with \eqref{60} and \eqref{61}, implies
$$\frac{d}{dt}\int_\Omega u^{\widetilde{p}}+\int_\Omega u^{\widetilde{p}}\leq C_2.$$
Hence, we get the desired result by the Gr\"{o}nwall inequality.
\end{proof}

If $n=2$, we make full use of the second equation of \eqref{1} to get the uniform $L^2$-norm of $u$.

\begin{lemma}\label{lm6}
Let $(u,v)$ be a solution of \eqref{1}. If $n=2$, $a>0,\,b>0$ and $\sigma>1$ are constants, then there exists a constant $C>0$ such that
$$
\int_{\Omega} u^2 \leq C \ \ \text{for all}\ \ t \in\left(0, T_{max}\right).
$$
\end{lemma}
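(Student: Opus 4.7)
The plan is to derive a scalar differential inequality of the form $y'(t)\leq C_1 y(t)^2+C_2 y(t)+C_3$ for $y(t):=\int_\Omega u^2$, and then to close it by the uniform Gr\"{o}nwall inequality (Lemma \ref{lm9}) using the time-averaged $L^2$ bound from Lemma \ref{lm11}. The dimension $n=2$ enters only through the Gagliardo--Nirenberg interpolation that controls the chemotactic cross term.

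First I would recycle the $p=2$ energy computation underlying Lemma \ref{lm2}, but stop one step earlier, before the general Sobolev-type bound on $\|\nabla v\|_{L^{2(p+1)}}$ is invoked. Using $\gamma(v)\geq\gamma(0)$, and the finiteness of $\sup_{0\leq v\leq Q}|\gamma'(v)|^2/\gamma(v)$ granted by the hypothesis \eqref{11} and the upper bound $v\leq Q$ from Lemma \ref{lm1}, this yields
\begin{equation*}
\frac{d}{dt}\int_\Omega u^2+\gamma(0)\int_\Omega|\nabla u|^2+2b\int_\Omega u^{\sigma+1}\leq C\int_\Omega u^2|\nabla v|^2+2a\int_\Omega u^2
\end{equation*}
for some constant $C>0$.

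Next I would estimate the cross term via $\int_\Omega u^2|\nabla v|^2\leq\|u\|_{L^4}^2\|\nabla v\|_{L^4}^2$. In dimension two, the Gagliardo--Nirenberg inequality $\|\nabla v\|_{L^4}\leq G(1)\|v\|_{W^{2,2}}^{1/2}\|v\|_{L^\infty}^{1/2}$ recalled in the introduction, together with the elliptic estimate $\|v\|_{W^{2,2}}\leq R(2)\|u\|_{L^2}$ and the pointwise bound $\|v\|_{L^\infty}\leq Q$ from Lemma \ref{lm1}, give $\|\nabla v\|_{L^4}^2\leq C'\|u\|_{L^2}$. The standard two-dimensional interpolation $\|u\|_{L^4}^2\leq C''(\|\nabla u\|_{L^2}\|u\|_{L^2}+\|u\|_{L^2}^2)$ then leaves a single linear factor of $\|\nabla u\|_{L^2}$ in the product, which Young's inequality with $\delta=\gamma(0)/(2C)$ splits as $\delta\|\nabla u\|_{L^2}^2+C_\delta\|u\|_{L^2}^4$. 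After absorbing the gradient piece into the left hand side and applying Young's inequality once more to the remaining $y^{3/2}$ contribution, I arrive at $y'(t)\leq C_6 y(t)^2+C_7 y(t)+C_8$.

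To conclude I would apply Lemma \ref{lm9} by writing this as $y'\leq c_1(t)y+c_2(t)$ with $c_1(t):=C_6 y(t)+C_7$ and $c_2(t):=C_8$: the bound $\int_t^{t+\tau}\int_\Omega u^2\leq C$ from Lemma \ref{lm11} controls $\int_t^{t+\tau}c_1$ uniformly in $t\in[0,T_{max}-\tau)$, while $\int_t^{t+\tau}c_2=C_8\tau$ is trivial, so Lemma \ref{lm9} yields a $t$-independent upper bound for $y$ on $[\tau,T_{max})$; boundedness on $[0,\tau]$ follows from the local existence theorem (Lemma \ref{lm5}). The main obstacle is ensuring that the coefficient produced in front of $\int_\Omega|\nabla u|^2$ when bounding $C\int_\Omega u^2|\nabla v|^2$ is strictly less than the dissipation constant $\gamma(0)$; this is exactly where $n=2$ is indispensable, since in higher dimensions the analogous interpolation of $\|u\|_{L^4}^2$ would supply a super-linear power of $\|\nabla u\|_{L^2}$ that cannot be absorbed without an unavailable smallness assumption on $|\gamma'|^2/\gamma$.
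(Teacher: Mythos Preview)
Your proposal is correct and follows essentially the same route as the paper's proof: both start from the $p=2$ energy inequality, split the cross term $\int_\Omega u^2|\nabla v|^2$ by H\"older, apply two-dimensional Gagliardo--Nirenberg interpolation to $\|u\|_{L^4}^2$ and to $\|\nabla v\|_{L^4}^2$ (the paper controls the latter via $\|\Delta v\|_{L^2}\leq\|u\|_{L^2}$ obtained by testing the elliptic equation with $-\Delta v$, which is equivalent to your use of $\|v\|_{W^{2,2}}\leq R(2)\|u\|_{L^2}$), absorb the single factor of $\|\nabla u\|_{L^2}$ by Young's inequality, and close with the uniform Gr\"onwall inequality (Lemma~\ref{lm9}) together with Lemma~\ref{lm11}. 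The only cosmetic difference is that the paper's final inequality reads $y'\leq C_6(1+y)y+C_7$ rather than your $y'\leq C_6 y^2+C_7 y+C_8$, which is the same thing.
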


\begin{proof}
We multiply the second equation of \eqref{1} by $-\Delta v$ and integrate the result by parts to have
\begin{equation*}
\int_\Omega |\Delta v|^2+\int_\Omega|\nabla v|^2=-\int_\Omega u\Delta v\leq\frac{1}{2}\int_\Omega |\Delta v|^2+\frac{1}{2}\int_\Omega u^2,
\end{equation*}
which yields
\begin{equation}\label{35}
\frac{1}{2}\int_\Omega |\Delta v|^2+\int_\Omega|\nabla v|^2\leq\frac{1}{2}\int_\Omega u^2.
\end{equation}
Taking $p=2$ in \eqref{10}, we get the following estimate
\begin{equation}\label{36}
\frac{d}{dt}\int_\Omega u^2+C_1\int_\Omega \left|\nabla u\right|^2+2b\int_\Omega u^{\sigma+1}\leq C_2\int_\Omega u^2|\nabla v|^2+2a\int_\Omega u^2
\end{equation}
for some constants $C_1,C_2>0$. Using \eqref{35}, Lemma \ref{lm1}, Lemma \ref{lm11}, H\"{o}lder's inequality, the Gagliardo-Nirenberg inequality (see \cite[Lemma 2.5]{J-K-W2018SIAM}) and Young's inequality, we can find constants $C_3,C_4,C_5,C_6>0$ such that
\begin{equation*}
\begin{aligned}
C_2\int_\Omega u^2|\nabla v|^2\leq&C_2\|u\|_{L^4(\Omega)}^2\|\nabla v\|_{L^4(\Omega)}^2\\
\leq&C_3\left(\|\nabla u\|_{L^2(\Omega)}^{\frac{n}{4}}\|u\|_{L^2(\Omega)}^{\frac{4-n}{4}}+\|u\|_{L^1(\Omega)}\right)^2\left(\|\Delta v\|_{L^2(\Omega)}^{\frac{1}{2}}\|v\|_{L^\infty(\Omega)}^{\frac{1}{2}}+\|v\|_{L^\infty(\Omega)}\right)^2\\
\leq&C_4\left(\|\nabla u\|_{L^2(\Omega)}^{\frac{n}{2}}\|u\|_{L^2(\Omega)}^{\frac{4-n}{2}}\|\Delta v\|_{L^2(\Omega)}+\|\nabla u\|_{L^2(\Omega)}^{\frac{n}{2}}\|u\|_{L^2(\Omega)}^{\frac{4-n}{2}}+\|\Delta v\|_{L^2(\Omega)}+1\right)\\
\leq&C_1\|\nabla u\|_{L^2(\Omega)}^2+C_5\left(1+\|\Delta v\|_{L^2(\Omega)}^{\frac{4}{4-n}}\right)\|u\|_{L^2(\Omega)}^2+C_5\\
\leq&C_1\|\nabla u\|_{L^2(\Omega)}^2+C_6\left(1+\|u\|_{L^2(\Omega)}^2\right)\|u\|_{L^2(\Omega)}^2+C_6,
\end{aligned}
\end{equation*}
which, combined with \eqref{36}, implies
$$\frac{d}{dt}\int_\Omega u^2\leq C_6\left(1+\|u\|_{L^2(\Omega)}^2\right)\int_\Omega u^2+C_7$$
for some constant $C_7>0$. An application of Lemma \ref{lm9} and Lemma \ref{lm11} gives the desired result.
\end{proof}

\begin{proof}[Proof of Theorem \ref{th4}]
Consolidating the results Lemma \ref{lm3}, Lemma \ref{lm4}, Lemma \ref{lm7},  Lemma \ref{lm16} and Lemma \ref{lm6}, we get Theorem \ref{th4} directly.
\end{proof}

\section{Large time behavior}\label{sec4}
In this section, we aim to study the large time behavior of \eqref{1}. We first improve the regularity of the solution component $u$.

\begin{lemma}\label{lm13}
There exist constants $\theta\in(0,1)$ and $C>0$ such that
$$\|u\|_{C^{\theta,\frac{\theta}{2}}(\overline{\Omega}\times[t,t+1])}\leqslant C\quad\text{for all}\ t>1.$$
\end{lemma}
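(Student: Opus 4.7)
The plan is to view the first equation of \eqref{1} as a uniformly parabolic equation in divergence form with bounded measurable coefficients and a bounded source, and then invoke the classical parabolic H\"older regularity theory.

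First, I would rewrite the equation for $u$ as
\[
u_t=\nabla\cdot\bigl(\gamma(v)\nabla u+\gamma'(v)u\nabla v\bigr)+au-bu^\sigma\quad\text{in}\ \Omega\times(0,+\infty),
\]
with conormal boundary condition $\bigl(\gamma(v)\nabla u+\gamma'(v)u\nabla v\bigr)\cdot\nu=0$ on $\partial\Omega$, which is indeed satisfied since $\partial_\nu u=\partial_\nu v=0$. By Theorem~\ref{th4}, there exists a constant $M>0$ independent of $t$ such that $\|u(\cdot,t)\|_{L^\infty(\Omega)}+\|v(\cdot,t)\|_{W^{1,\infty}(\Omega)}\leq M$ for every $t>0$. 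Combining this with the pointwise bound $v\leq Q$ from Lemma~\ref{lm1} and hypothesis \eqref{11}, the principal coefficient satisfies $\gamma(Q)\leq\gamma(v(x,t))\leq\gamma(0)$, so the equation is uniformly parabolic; moreover the drift $\gamma'(v)u\nabla v$ and the reaction $au-bu^\sigma$ are uniformly bounded in $L^\infty(\Omega\times(0,+\infty))$.

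Given these uniform structural bounds, I would then apply a standard interior-and-boundary parabolic H\"older estimate for bounded weak solutions of divergence-form equations with bounded measurable coefficients (of Ladyzhenskaya--Solonnikov--Ural'tseva type, or equivalently of De Giorgi--Nash--Moser type adapted to smooth domains with homogeneous conormal data). Applied on parabolic cylinders of the form $\overline{\Omega}\times[t,t+1]$, this yields constants $\theta\in(0,1)$ and $C>0$, depending only on $\gamma(0)$, $\gamma(Q)$, $M$, $a$, $b$, $\sigma$ and $\Omega$ but not on $t$, such that
\[
\|u\|_{C^{\theta,\theta/2}(\overline{\Omega}\times[t,t+1])}\leq C\quad\text{for all}\ t>1,
\]
the restriction $t>1$ serving only to stay away from the initial layer.

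The only real judgment call is choosing the correct reference: one needs a parabolic H\"older estimate up to the boundary that tolerates merely $L^\infty$ principal coefficient and accommodates the conormal boundary condition on a smooth domain. No genuine obstacle arises, because uniform ellipticity, the $L^\infty$ bounds on all coefficients and the source, and the smoothness of $\partial\Omega$ are all in place thanks to Theorem~\ref{th4} and Lemma~\ref{lm1}; hence this lemma is essentially a packaging of the $L^\infty$ estimates already derived.
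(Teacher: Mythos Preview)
Your proposal is correct and follows essentially the same approach as the paper: rewrite the $u$-equation in divergence form, use the global bounds from Theorem~\ref{th4} and Lemma~\ref{lm1} to verify uniform parabolicity and boundedness of all lower-order terms, and then invoke a parabolic H\"older regularity theorem. The only difference is the reference chosen---the paper appeals to the result of Porzio--Vespri \cite{P-V1993} (which covers possibly degenerate equations), whereas you cite the classical De~Giorgi--Nash--Moser/LSU theory, which is entirely adequate here since the equation is genuinely uniformly parabolic.
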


\begin{proof}
Let $\psi_0=C\gamma'^2(v)|\nabla v|^2$, $\psi_1=\gamma'(v)\nabla v$ and $\psi_2=a u+bu^\sigma$ in conditions $(A_1),\,(A_2),\,(A_3)$ of \cite{P-V1993}. With an application of the results in \cite{P-V1993} to the solution of the first equation of \eqref{1} and the boundedness of $u,\ v$ and $\nabla v$, we get the results directly.
\end{proof}

Based on the Lyapunov functionals method, the large time behavior of the solution can be studied. We first show a basic property on the solution component $u$.

\begin{lemma}\label{lm8}
For any $\alpha>1$, there exists a constant $T_*>0$ such that
\begin{equation}\label{42}
\int_\Omega u\leq\alpha u_*|\Omega|\quad\text{for all}\ t>T_*,
\end{equation}
where $u_*$ is defined in \eqref{33}.
\end{lemma}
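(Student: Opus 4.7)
The plan is to derive a scalar ODE inequality for the mass $y(t):=\int_\Omega u(\cdot,t)$ and then read off the long-time bound from its phase portrait. Integrating the first equation of \eqref{1} over $\Omega$ and using the Neumann boundary condition gives
\[
y'(t)=a\int_\Omega u-b\int_\Omega u^\sigma\quad\text{for all}\ t>0.
\]
By H\"older's inequality (with exponent $\sigma>1$) one has $\int_\Omega u^\sigma\geq|\Omega|^{1-\sigma}y^\sigma$, so
\[
y'(t)\leq a y(t)-\frac{b}{|\Omega|^{\sigma-1}}y(t)^\sigma=y(t)\left(a-\frac{b}{|\Omega|^{\sigma-1}}y(t)^{\sigma-1}\right).
\]

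Next I would analyze this Bernoulli-type inequality. The right-hand side vanishes exactly at $y=0$ and at
\[
y=\left(\frac{a|\Omega|^{\sigma-1}}{b}\right)^{\!\frac{1}{\sigma-1}}=\left(\frac{a}{b}\right)^{\!\frac{1}{\sigma-1}}|\Omega|=u_*|\Omega|,
\]
and it is strictly negative whenever $y>u_*|\Omega|$. Fix $\alpha>1$ and set $y_\alpha:=\alpha u_*|\Omega|$; then whenever $y(t)\geq y_\alpha$ we have
\[
y'(t)\leq y_\alpha\left(a-\frac{b}{|\Omega|^{\sigma-1}}y_\alpha^{\sigma-1}\right)=a\,y_\alpha\,(1-\alpha^{\sigma-1})=:-\delta<0,
\]
a strictly negative constant. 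Hence $y$ cannot remain $\geq y_\alpha$ on any infinite interval: it must drop below $y_\alpha$ in finite time. Moreover, once $y(t_0)<y_\alpha$ for some $t_0$, the continuous function $y$ cannot re-enter $\{y\geq y_\alpha\}$ without crossing the level $y_\alpha$ from below, which is impossible because at such a crossing $y'\leq-\delta<0$. Therefore there exists $T_*>0$ with $y(t)\leq y_\alpha$ for all $t>T_*$, proving \eqref{42}.

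The only delicate point is the comparison/invariance argument, but it reduces to the elementary observation that at any candidate re-entry point $y=y_\alpha$ we have $y'\leq-\delta<0$, so $\{y\leq y_\alpha\}$ is forward invariant from the first time it is reached. Everything else is just Neumann integration and H\"older's inequality, so I expect the proof to be extremely short.
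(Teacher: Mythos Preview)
Your argument is correct and follows essentially the same route as the paper: both derive the scalar inequality $y'\leq ay-\tfrac{b}{|\Omega|^{\sigma-1}}y^\sigma$ from mass integration and H\"older, and then exploit that the unique positive equilibrium is $u_*|\Omega|$. The only cosmetic difference is that the paper solves the Bernoulli equation explicitly and compares, whereas you read off the conclusion directly from the sign of the right-hand side via a barrier/forward-invariance argument; either method gives the result in a few lines.
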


\begin{proof}
Integrating the first equation in \eqref{1} over $\Omega$ and using the boundary condition as well as H\"{o}lder's inequality, we have
\begin{equation*}
\frac{d}{dt}\int_\Omega u\leq a\int_\Omega u-\frac{b}{|\Omega|^{\sigma-1}}\left(\int_\Omega u\right)^\sigma\quad \text{for all}\ t\geqslant0.
\end{equation*}
In order to solve this ordinary differential inequality, we first consider the following ordinary differential equation
\begin{equation*}
\begin{cases}
\frac{d}{dt}y(t)=ay(t)-\frac{b}{|\Omega|^{\sigma-1}}y^\sigma(t),\quad t\geqslant0,\\
y(0)=\int_\Omega u_0.
\end{cases}
\end{equation*}
which is a Bernoulli's equation having the solution
$$y(t)=\left\{\frac{b}{a|\Omega|^{\sigma-1}}+\left[\left(\int_\Omega u_0\right)^{-(\sigma-1)}-\frac{b}{a|\Omega|^{\sigma-1}}\right] e^{-a(\sigma-1) t}\right\}^{-\frac{1}{\sigma-1}}.$$
This, along with a comparison argument, shows that
\begin{equation}\label{43}
\int_\Omega u\leq\left\{\frac{b}{a|\Omega|^{\sigma-1}}+\left[\left(\int_\Omega u_0\right)^{-(\sigma-1)}-\frac{b}{a|\Omega|^{\sigma-1}}\right] e^{-a(\sigma-1) t}\right\}^{-\frac{1}{\sigma-1}}.
\end{equation}
Observing
$$\left\{\frac{b}{a|\Omega|^{\sigma-1}}+\left[\left(\int_\Omega u_0\right)^{-(\sigma-1)}-\frac{b}{a|\Omega|^{\sigma-1}}\right] e^{-a(\sigma-1) t}\right\}^{-\frac{1}{\sigma-1}}\rightarrow u_*|\Omega|$$
as $t\rightarrow+\infty$, we can find a constant $T_*>0$ such that
$$\left\{\frac{b}{a|\Omega|^{\sigma-1}}+\left[\left(\int_\Omega u_0\right)^{-(\sigma-1)}-\frac{b}{a|\Omega|^{\sigma-1}}\right] e^{-a(\sigma-1) t}\right\}^{-\frac{1}{\sigma-1}}\leq\alpha u_*|\Omega|\quad\text{for all}\ t>T_*.$$
Together with \eqref{43}, we get  \eqref{42}.
\end{proof}

We note that the result in Lemma \ref{lm8} slightly improves the result of  \cite[Lemma 2.2]{M.W.2020ANS} where $\alpha=2$. To prove the large time behavior of solutions, we shall prove an inequality below, which looks similar to \cite[Lemma 3.4]{M.W.2020ANS} derived for the classical logistic  Keller-Segel system  without density-suppressed motility. Here we extend this inequality to the density-suppressed motility model with the logistic source. Since the proof has some differences from \cite{M.W.2020ANS},  we provide the details for clarity.

\begin{lemma}\label{lm12}
Let
$$q:=\begin{cases}
\frac{2}{3-\sigma},&\text{if}\ \ \sigma\in(1,2),\\
2,&\text{if}\ \ \sigma\in[2,+\infty).
\end{cases}$$
Then there exist constants $K_1>0$ which is defined in \eqref{66}and $T_*\geq0$ such that
\begin{equation}\label{1-15}
\left\|u-u_*\right\|_{L^{q}(\Omega)}^{2} \leq K_{1}u_*^{2-\sigma} \int_{\Omega}\left(u^{\sigma-1}-u_*^{\sigma-1}\right) \left(u-u_*\right) \quad \text{for all}\ \ t>T_*
\end{equation}
where $u_*>0$ is defined in \eqref{33}.
\end{lemma}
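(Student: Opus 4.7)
The plan is to prove the inequality in two regimes split by the value of $\sigma$, since the behavior of $s\mapsto s^{\sigma-1}$ changes qualitatively at $\sigma=2$.

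In the easy case $\sigma\ge 2$ (where $q=2$ and $K_1=1$), I would establish the pointwise inequality
$$(u^{\sigma-1}-u_*^{\sigma-1})(u-u_*)\ge u_*^{\sigma-2}(u-u_*)^2\quad\text{for all } u\ge 0.$$
Setting $t=u/u_*$ this reduces to $(t^{\sigma-1}-t)(t-1)\ge 0$, which is immediate because $s\mapsto s^{\sigma-1}-s$ has the same sign as $s-1$ for $\sigma-1\ge 1$. Integration then yields the claim at once, and no use of $T_*$ is needed.

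The delicate regime is $\sigma\in(1,2)$, where $s\mapsto s^{\sigma-1}$ is concave and the pointwise square-integrand bound degenerates as $u\to\infty$. I would split $\Omega=\Omega_1\cup\Omega_2$ with
$\Omega_1:=\{u\le 2u_*\}$ and $\Omega_2:=\{u>2u_*\}$. On $\Omega_1$, an application of the mean value theorem at some $\xi\in[\min(u,u_*),\max(u,u_*)]\subseteq(0,2u_*]$ together with the monotonicity $\xi\mapsto\xi^{\sigma-2}$ decreasing gives
$$(u^{\sigma-1}-u_*^{\sigma-1})(u-u_*)\ge (\sigma-1)(2u_*)^{\sigma-2}(u-u_*)^2.$$
A H\"older inequality with exponents $2/q$ and $2/(2-q)$ then bounds $\|u-u_*\|_{L^q(\Omega_1)}^2$ by $|\Omega|^{2/q-1}=|\Omega|^{2-\sigma}$ times $\int_{\Omega_1}(u-u_*)^2$, producing the first term of $K_1$ up to a harmless absolute factor. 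On $\Omega_2$, since $u/u_*>2$, I would use the two elementary bounds
$$u^{\sigma-1}-u_*^{\sigma-1}\ge (1-2^{-(\sigma-1)})\,u^{\sigma-1},\qquad u-u_*>u/2,$$
to get $(u^{\sigma-1}-u_*^{\sigma-1})(u-u_*)\ge\tfrac{1}{2}(1-2^{-(\sigma-1)})(u-u_*)^\sigma$. Because $q\in(1,\sigma)$, the interpolation identity $1/q=(2-\sigma)/2+\sigma/2\cdot(1/\sigma)$ yields
$$\|u-u_*\|_{L^q(\Omega_2)}^2\le \|u-u_*\|_{L^1(\Omega_2)}^{2-\sigma}\,\|u-u_*\|_{L^\sigma(\Omega_2)}^{\sigma}.$$

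This is where Lemma~\ref{lm8} enters: since $0<u-u_*<u$ on $\Omega_2$, choosing $\alpha=2$ there produces $T_*$ such that $\|u-u_*\|_{L^1(\Omega_2)}\le 2u_*|\Omega|$ for all $t>T_*$, and the $L^\sigma$ factor is absorbed by the lower bound above. This delivers the second term of $K_1$. Finally, I assemble the two pieces via the convexity inequality $(A+B)^{2/q}\le 2^{2/q-1}(A^{2/q}+B^{2/q})$, valid because $2/q=3-\sigma\ge 1$, and factor out $u_*^{2-\sigma}$ to reach the desired bound with the constant $K_1$ as defined in \eqref{66}.

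The main obstacle is the mismatch of natural integrability scales in the two pieces: the pointwise $L^2$-control is only effective where $u$ stays moderate, while on the tail $\Omega_2$ the cubic $(u-u_*)^2$ control fails and must be downgraded to $(u-u_*)^\sigma$; the deficit between $q$ and $\sigma$ has to be closed by an $L^1$ bound, which is precisely why Lemma~\ref{lm8} (and hence the waiting time $T_*$) is unavoidable. Once these scales are aligned, matching the exponents in $K_1$ is a routine bookkeeping exercise using $q=2/(3-\sigma)$.
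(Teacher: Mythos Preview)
Your proposal is correct and follows essentially the same approach as the paper: the same pointwise inequality for $\sigma\ge 2$, and for $\sigma\in(1,2)$ the same decomposition into $\{u\le 2u_*\}$ (mean value theorem plus H\"older) and $\{u>2u_*\}$ (the elementary bounds $u^{\sigma-1}-u_*^{\sigma-1}\ge(1-2^{-(\sigma-1)})u^{\sigma-1}$, $u-u_*>u/2$, interpolation between $L^1$ and $L^\sigma$, and Lemma~\ref{lm8} with $\alpha=2$ to control the $L^1$ factor), reassembled via the convexity inequality $(A+B)^{2/q}\le 2^{2/q-1}(A^{2/q}+B^{2/q})$. The only cosmetic differences are that you verify the $\sigma\ge 2$ case via the substitution $t=u/u_*$ rather than a two-sided case split, and on $\{u>2u_*\}$ you interpolate $\|u-u_*\|$ directly whereas the paper first passes to $\|u\|$; both variants are valid.
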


\begin{proof}
Step 1: If $\sigma\in[2,+\infty)$, then $u^{\sigma-2} \leq u_*^{\sigma-2}$ for $u \in[0,u_*]$, and thus
\begin{equation}\label{1-5}
\begin{aligned}
\left(u-u_*\right)\left(u^{\sigma-1}-u_*^{\sigma-1}\right)=&\left(u_*-u\right)\left(u_*^{\sigma-1}-u^{\sigma-1}\right)\\
\geq&\left(u_*-u\right)\left(u_*^{\sigma-1}-u_*^{\sigma-2}\right)\\
=&u_*^{\sigma-2}\left(u-u_*\right)^2\quad \text{for all}\ u \in[0,u_*]
\end{aligned}
\end{equation}
whereas $u_*^{\sigma-2} \leq u^{\sigma-2}$ for $u \in(u_*,+\infty)$, and hence
\begin{align*}
\left(u-u_*\right)\left(u^{\sigma-1}-u_*^{\sigma-1}\right)\geq&\left(u -u_*\right)\left(u_*^{\sigma-2}-u_*^{\sigma-1}\right)
=u_*^{\sigma-2}\left(u-u_*\right)^2\quad \text{for all}\ u\in(u_*,+\infty).
\end{align*}
This, along with \eqref{1-5}, establishes
$$\left(u-u_*\right)\left(u^{\sigma-1}-u_*^{\sigma-1}\right)\geq u_*^{\sigma-2}\left(u-u_*\right)^2\quad \text{for any}\ \ \sigma\in[2,+\infty),$$
which implies \eqref{1-15}.

Step 2: If $\sigma\in(1,2)$, then $\varphi(\xi):=\xi^{\sigma-1}$ is a continuous function on $[0,2u_*]$ and $\varphi'(\xi)=(\sigma-1)\xi^{\sigma-2}$ on $(0,2u_*)$. For $u\in[0,u_*]$, the mean value theorem yields that there exists $\theta\in(u,u_*)$ such that
\begin{align*}
u^{\sigma-1}-u_*^{\sigma-1}=&(\sigma-1)\theta^{\sigma-2}\left(u-u_*\right)
\leq (\sigma-1)u_*^{\sigma-2}\left(u-u_*\right),
\end{align*}
which implies
\begin{equation}\label{1-7}
(u-u_*)(u^{\sigma-1}-u_*^{\sigma-1})\geq (\sigma-1)u_*^{\sigma-2}(u-u_*)^2.
\end{equation}
Meanwhile, for $u\in(u_*,2u_*]$, an application of the mean value theorem gives $\vartheta\in(u_*,u)$ satisfying
\begin{align*}
u^{\sigma-1}-u_*^{\sigma-1}=&(\sigma-1)\vartheta^{\sigma-2}\left(u-u_*\right)
\geq (\sigma-1)(2u_*)^{\sigma-2}\left(u-u_*\right),
\end{align*}
which yields
$$(u-u_*)(u^{\sigma-1}-u_*^{\sigma-1})\geq (\sigma-1)(2u_*)^{\sigma-2}(u-u_*)^2.$$
This together with \eqref{1-7} proves
\begin{equation}\label{47}
(u-u_*)(u^{\sigma-1}-u_*^{\sigma-1})\geq (\sigma-1)(2u_*)^{\sigma-2}(u-u_*)^2\quad\text{for any}\ \ u\in[0,2u_*]
\end{equation}
due to $2^{\sigma-2}\leq1$.

Since $q \leq 2$, we have $(s+r)^{\frac{2}{q}} \leq 2^{\frac{2}{q}-1}\left(s^{\frac{2}{q}}+r^{\frac{2}{q}}\right)$ for $s \geq 0$ and $r \geq 0$ which yields
\begin{equation}\label{1-12}
\begin{aligned}
\left\|u-u_*\right\|_{L^{q}(\Omega)}^{2}=&\left(\int_{\left\{u<2 u_*\right\}}\left|u-u_*\right|^{q}+\int_{\left\{u\geq 2 u_*\right\}}\left(u-u_*\right)^{q}\right)^{\frac{2}{q}}\\
\leq& 2^{\frac{2}{q}-1}\left(\int_{\left\{u<2 u_*\right\}}\left|u-u_*\right|^{q}\right)^{\frac{2}{q}}+2^{\frac{2}{q}-1} \left(\int_{\left\{u\geq 2 u_*\right\}} u^{q}\right)^{\frac{2}{q}}
\end{aligned}
\end{equation}
for all $t\geq0$. Now we estimate the inequalities on the right hand side of \eqref{1-12}.

For the first integral, noticing again $q \leq 2$, we invoke the H\"{o}lder inequality and \eqref{47} to find that for all $t\geq 0$,
\begin{equation}\label{1-13}
\begin{aligned}
2^{\frac{2}{q}-1}\left(\int_{\left\{u<2 u_*\right\}}\left|u-u_*\right|^{q}\right)^{\frac{2}{q}} \leq& 2^{\frac{2}{q}-1}|\Omega|^{\frac{2-q}{q}} \int_{\left\{u<2 u_*\right\}}\left(u-u_*\right)^{2}\\
\leq&\frac{(2|\Omega|)^{\frac{\sigma+1}{3-\sigma}}(2u_*)^{2-\sigma}}{\sigma-1}\int_{\left\{u<2 u_*\right\}}\left(u^{\sigma-1}-u_*^{\sigma-1}\right) \left(u-u_*\right).
\end{aligned}
\end{equation}
Thanks to Lemma \ref{lm8}, we can pick a constant $T_*>0$ satisfying
\begin{equation}\label{1-9}
\left\|u\right\|_{L^1(\Omega)}\leq2u_*|\Omega|\quad\text{for all}\ t>T_*.
\end{equation}
In order to estimate the second integral, we may utilize \eqref{1-9} to see that if $\sigma<2,$ it follows from the H\"{o}lder inequality and our special choice of $q$ that
\begin{equation}\label{1-10}
\begin{aligned}
\left(\int_{\left\{u\geq 2 u_*\right\}} u^{q}\right)^{\frac{2}{q}}=\left\|u\right\|_{L^{\frac{2}{3-\sigma}}\left(\left\{u \geq2 u_*\right\}\right)}^{2}
\leq&\left\|u\right\|_{L^{1}\left(\left\{u\geq 2 u_*\right\}\right)}^{2-\sigma}\left\|u\right\|_{L^{\sigma}\left(\left\{u\geq 2 u_*\right\}\right)}^{\sigma} \\
\leq&\left(2 u_*|\Omega|\right)^{2-\sigma} \int_{\left\{u\geq 2 u_*\right\}} u^\sigma\quad\text{for all}\ t>T_*.
\end{aligned}
\end{equation}
Simple calculation shows
\begin{align*}
\int_{\left\{u\geq2 u_*\right\}}\left(u^{\sigma-1}-u_*^{\sigma-1}\right)\left(u-u_*\right) \geq& \int_{\left\{u\geq2 u_*\right\}}\left(u^{\sigma-1}-\left(\frac{u}{2}\right)^{\sigma-1}\right) \left(u-\frac{u}{2}\right)\\
=&\frac{1-2^{-(\sigma-1)}}{2} \int_{\left\{u\geq 2 u_*\right\}} u^\sigma\quad\text{for all}\ t\geq0,
\end{align*}
which, along with \eqref{1-10}, indicates that
\begin{equation}\label{1-14}
2^{\frac{2}{q}-1} \left(\int_{\left\{u\geq2 u_*\right\}} u^{q}\right)^{\frac{2}{q}}\leq \frac{2^{\frac{4}{3-\sigma}}(2u_*|\Omega|)^{2-\sigma}}{1-2^{-(\sigma-1)}}\int_{\left\{u \geq2 u_*\right\}}\left(u^{\sigma-1}-u_*^{\sigma-1}\right)\left(u-u_*\right)
\end{equation}
for all $t>T_*$.

Collecting \eqref{1-12}, \eqref{1-13} and \eqref{1-14}, we obtain \eqref{1-15}.
\end{proof}

Next we derive an estimate on $v$.

\begin{lemma}\label{lm10}
We have
$$\int_\Omega|\nabla v|^2+\int_\Omega(v-\overline{v})^2\leq K_2\|u-u_*\|_{L^q(\Omega)}^2,$$
where $q$ is defined in Lemma \ref{lm12} and $K_2>0$ is defined in \eqref{64}.
\end{lemma}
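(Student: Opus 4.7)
The plan is to test the elliptic equation $-\Delta v + v = u$ against $v - \overline{v}$, which integrates the two left-hand terms into exactly the quantity we want to bound. Since $\int_\Omega(v-\overline{v}) = 0$, on the right-hand side we may freely replace $u$ by $u - u_*$, producing the duality pairing
\[
\int_\Omega |\nabla v|^2 + \int_\Omega (v-\overline{v})^2 \;=\; \int_\Omega (u-u_*)(v-\overline{v}).
\]
Everything then reduces to controlling this pairing by the $L^q$ norm of $u-u_*$.

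Next I would apply H\"older's inequality with exponents $q$ and $q' = q/(q-1)$. In the regime $\sigma \ge 2$, we have $q = 2 = q'$, so the estimate gives
\[
\int_\Omega|\nabla v|^2 + \int_\Omega(v-\overline v)^2 \;\le\; \|u-u_*\|_{L^2(\Omega)}\,\|v-\overline v\|_{L^2(\Omega)}.
\]
Since the second factor is bounded by the square root of the left-hand side, one division yields the desired inequality with $K_2 = 1$. In the regime $\sigma\in(1,2)$, we have $q=\frac{2}{3-\sigma}$ and therefore $q' = \frac{2}{\sigma-1}$, so the factor we need to control is $\|v-\overline v\|_{L^{q'}(\Omega)}$. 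Here I would invoke the Poincar\'e inequality defining $\xi$ to bound it by $\xi\|\nabla v\|_{L^2(\Omega)}$; applying Young's inequality (or, equivalently, dividing by $\sqrt{\int_\Omega|\nabla v|^2 + \int_\Omega(v-\overline v)^2}$) delivers the inequality with $K_2 = \xi^2$.

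The only subtle point, and what I expect to be the main technical check, is confirming that $q' = \frac{2}{\sigma-1}$ lies in the admissible range $[1,\frac{2n}{(n-2)_+})$ of the Poincar\'e inequality. For $n\le 2$ this is automatic since the upper endpoint is $+\infty$; for $n\ge 3$ the condition reduces to $\sigma > 2 - \frac{2}{n}$, which is exactly the hypothesis imposed in case~(4) of Theorem~\ref{th3}. So the lemma as stated is consistent with the hypotheses under which it will be used, and no dimensional obstruction arises within the allowed parameter regime.
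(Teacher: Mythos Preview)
Your proposal is correct and follows essentially the same approach as the paper: both test the elliptic equation against $v-\overline{v}$ to obtain the identity $\int_\Omega|\nabla v|^2+\int_\Omega(v-\overline{v})^2=\int_\Omega(v-\overline{v})(u-u_*)$, then split into the two cases $\sigma\ge 2$ and $\sigma<2$, using the Poincar\'e inequality with constant $\xi$ in the latter and checking the same admissibility condition $q'<\frac{2n}{(n-2)_+}$. The only cosmetic difference is that the paper absorbs the right-hand side via Young's inequality while you divide through by the square root of the left-hand side; these are equivalent and yield the same constants $K_2=1$ and $K_2=\xi^2$.
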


\begin{proof}
Due to
$$\overline{v}=\frac{1}{|\Omega|}\int_\Omega v,$$
it holds that
\begin{equation}\label{48}
\int_\Omega(v-\overline{v})=0.
\end{equation}
Multiplying the second equation of \eqref{1} by $v-\overline{v}$ and integrating the result over $\Omega$, we get
\begin{equation}\label{44}
\int_\Omega|\nabla(v-\overline{v})|^2+\int_\Omega(v-\overline{v})v=\int_\Omega(v-\overline{v})u,
\end{equation}
which, along with \eqref{48}, updates \eqref{44} as
\begin{equation}\label{49}
\int_\Omega|\nabla v|^2+\int_\Omega(v-\overline{v})^2=\int_\Omega(v-\overline{v})(u-u_*).
\end{equation}
We proceed with two cases. If $\sigma\in[2,+\infty)$, the Young inequality implies
$$\int_\Omega(v-\overline{v})(u-u_*)\leq\frac{1}{2}\int_\Omega(v-\overline{v})^2+\frac{1}{2}\|u-u_*\|_{L^q(\Omega)}^2$$
which, along with \eqref{49}, yields the desired result. 
If $1<\sigma<2$ with 
$$\sigma\in\begin{cases}\left(1,2\right),&\text{if}\ n\leq2,\\
\left(2-\frac2n,2\right),&\text{if}\ n>2,
\end{cases}$$
thanks to the definition of $q$, we obtain from Poincar\'{e}'s inequality and the Sobolev imbedding theorem that there exists a constant $\xi>0$ such that
\begin{equation*}
\|v-\overline{v}\|_{L^{\frac{q}{q-1}}(\Omega)}\leq\xi\|\nabla v\|_{L^2(\Omega)},
\end{equation*}
which, along with H\"{o}lder's inequality and the Young inequality, yields
\begin{equation*}
\begin{aligned}
\int_\Omega(v-\overline{v})(u-u_*)\leq&\|v-\overline{v}\|_{L^{\frac{q}{q-1}}(\Omega)}\|u-u_*\|_{L^q(\Omega)}\\
\leq&C_p\|\nabla v\|_{L^2(\Omega)}\|u-u_*\|_{L^q(\Omega)}\\
\leq&\frac{1}{2}\|\nabla v\|_{L^2(\Omega)}^2+\frac{\xi^2}{2}\|u-u_*\|_{L^q(\Omega)}^2.
\end{aligned}
\end{equation*}
Therefore, we use \eqref{49} to finish the proof of the lemma.
\end{proof}

To study the large time behavior of solutions, we shall construct a Lyapunov functional for \eqref{1} inspired by works \cite{J-K-W2018SIAM, T-W2015SIAM}. Given a positive number $u_*$, let $\varphi_{u_*}: (0,\infty)\rightarrow\mathbb{R}$ be defined by
$$\varphi_{u_*}(x):=x-u_*-u_*\ln\frac{x}{u_*},\quad x>0.$$
Then $\varphi_{u_*}$ is convex with $\varphi_{u_*}(u_*)=\varphi'_{u_*}(u_*)=0$, which implies $\varphi_{u_*}(x)\geq0$ for all $x>0$. For any nonnegative continuous function $u:\overline{\Omega}\rightarrow(0,\infty)$, we define an energy functional for \eqref{1} as follows:
\begin{equation*}
{F}(u)=\int_\Omega\left(u-u_*-u_*\ln\frac{u}{u_*}\right)
\end{equation*}
which is nonnegative and non-increasing. We further have the following results.

\begin{lemma}\label{lm15}
If $b>b_2$ where $b_2$ is defined in \eqref{62}, then there exists a constant $C>0$ such that
$$\frac{d}{dt}F(u)+C \left\{\int_\Omega(v-\overline{v})^2+\|u-u_*\|_{L^q(\Omega)}^2\right\}\leq 0\quad\text{for all}\ t>T^*,$$
where $u_*$ is defined as in \eqref{33}. Moreover, there exists a constant $C>0$ such that
$$\int_{T^*}^{+\infty}\int_\Omega\left(v-\overline{v}\right)^2+\int_{T^*}^{+\infty}\left\|u-u_*\right\|^2_{L^q(\Omega)}\leq C.$$
\end{lemma}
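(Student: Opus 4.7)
The plan is to differentiate the Lyapunov functional $F(u)$ along the flow, extract from the resulting identity two nonpositive dissipation terms plus one "bad" term coming from the chemotactic drift, absorb the drift into the reaction dissipation by means of Lemmas \ref{lm12} and \ref{lm10}, and then check that the threshold $b>b_2$ is exactly what makes the final quadratic form strictly coercive in both $\|u-u_*\|_{L^q(\Omega)}^2$ and $\int_\Omega(v-\overline v)^2$. Note that by the strong maximum principle applied to the first equation of \eqref{1}, $u>0$ on $\overline{\Omega}\times(0,+\infty)$, so $F(u)$ and $\ln u$ are well-defined, and from Theorem \ref{th4} the quantity $F(u(\cdot,t))$ stays finite uniformly in $t$.

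First I would test the PDE against $1-u_*/u$, obtaining
\[
\frac{d}{dt}F(u)=-u_*\!\int_\Omega\!\frac{\gamma(v)|\nabla u|^2}{u^2}-u_*\!\int_\Omega\!\frac{\gamma'(v)}{u}\nabla u\cdot\nabla v-b\!\int_\Omega(u-u_*)(u^{\sigma-1}-u_*^{\sigma-1}),
\]
where the reaction term is rewritten using $a=bu_*^{\sigma-1}$. An application of Cauchy--Schwarz/Young with weight one absorbs the cross term into the first (strictly negative) diffusion term, leaving
\[
\frac{d}{dt}F(u)\le\frac{u_*}{4}\!\int_\Omega\!\frac{|\gamma'(v)|^2}{\gamma(v)}|\nabla v|^2-b\!\int_\Omega(u-u_*)(u^{\sigma-1}-u_*^{\sigma-1})\le \frac{u_*M_0}{4}\!\int_\Omega|\nabla v|^2-b\!\int_\Omega(u-u_*)(u^{\sigma-1}-u_*^{\sigma-1}),
\]
with $M_0:=\sup_{v\ge 0}|\gamma'(v)|^2/\gamma(v)$, which is finite thanks to the pointwise bound $v\le Q$ from Lemma \ref{lm1} together with \eqref{11}.

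Next, for $t>T_*$ (the time from Lemma \ref{lm8}), I invoke Lemma \ref{lm12} to bound $\int_\Omega(u-u_*)(u^{\sigma-1}-u_*^{\sigma-1})\ge u_*^{\sigma-2}K_1^{-1}\|u-u_*\|_{L^q(\Omega)}^2$, and Lemma \ref{lm10} to bound $\int_\Omega|\nabla v|^2\le K_2\|u-u_*\|_{L^q(\Omega)}^2-\int_\Omega(v-\overline v)^2$. Substituting both yields
\[
\frac{d}{dt}F(u)\le -\frac{u_*M_0}{4}\!\int_\Omega(v-\overline v)^2-\Bigl(\frac{bu_*^{\sigma-2}}{K_1}-\frac{u_*M_0K_2}{4}\Bigr)\|u-u_*\|_{L^q(\Omega)}^2.
\]
An algebraic manipulation using $u_*=(a/b)^{1/(\sigma-1)}$ shows that the coefficient of $\|u-u_*\|_{L^q(\Omega)}^2$ is strictly positive precisely when $b>b_2$ (matching the definition \eqref{62}); taking $C$ to be the minimum of the two positive coefficients then gives the first assertion with $T^*=T_*$. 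For the second claim, one integrates from $T^*$ to $T$, uses $F(u(\cdot,T))\ge 0$, and lets $T\to+\infty$ to conclude $\int_{T^*}^{+\infty}\!\!\int_\Omega(v-\overline v)^2+\int_{T^*}^{+\infty}\|u-u_*\|_{L^q(\Omega)}^2\le C^{-1}F(u(\cdot,T^*))$, finite by Theorem \ref{th4}.

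The only delicate step is the matching of constants: one has to verify that the positivity condition $bu_*^{\sigma-2}/K_1 > u_*M_0K_2/4$ is equivalent to $b>b_2$ as given in \eqref{62}, which amounts to raising both sides to the power $(\sigma-1)/2$ after substituting $u_*=(a/b)^{1/(\sigma-1)}$. Aside from that bookkeeping, the argument is a clean combination of the Lyapunov calculation with the two functional inequalities already in hand.
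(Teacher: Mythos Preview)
Your proof is correct and follows essentially the same route as the paper: compute $\frac{d}{dt}F(u)$ by testing against $1-u_*/u$, absorb the cross term $-u_*\int_\Omega\frac{\gamma'(v)}{u}\nabla u\cdot\nabla v$ into the diffusion term via Young's inequality (the paper packages this same step as positive semidefiniteness of a $2\times2$ matrix with an auxiliary parameter $\varepsilon\in[\tfrac{u_*M_0}{4},\tfrac{bu_*^{\sigma-2}}{K_1K_2})$), and then invoke Lemmas~\ref{lm12} and~\ref{lm10} to close the estimate. One small inaccuracy worth fixing: the finiteness of $M_0=\sup_{v\ge0}|\gamma'(v)|^2/\gamma(v)$ does \emph{not} follow from the bound $v\le Q$ (that only controls the supremum over $[0,Q]$); rather, $M_0<\infty$ is implicit in the hypothesis $b>b_2$, since $b_2$ as defined in \eqref{62} would otherwise be infinite.
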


\begin{proof}
Noticing Lemma \ref{lm12} and Lemma \ref{lm10}, one can find constants $K_1,K_2>0$ such that
\begin{equation}\label{46}
\left\|u-u_*\right\|_{L^{q}(\Omega)}^{2} \leq K_{1}u_*^{2-\sigma} \int_{\Omega}\left(u^{\sigma-1}-u_*^{\sigma-1}\right) \left(u-u_*\right) \quad \text{for all}\ \ t>T_*
\end{equation}
and
\begin{equation}\label{30}
\int_\Omega|\nabla v|^2+\int_\Omega(v-\overline{v})^2\leq K_2\|u-u_*\|_{L^q(\Omega)}^2.
\end{equation}
An application of the first equation of the system \eqref{1} and integration by parts, we find
\begin{equation}\label{32}
\begin{aligned}
&\frac{d}{dt}\int_\Omega\left(u-u_*-u_*\ln\frac{u}{u_*}\right)
=\int_\Omega u_t-u_*\int_\Omega \frac{u_t}{u}\\
=&a\int_\Omega u-b\int_\Omega u^\sigma-u_*\int_\Omega\frac{1}{u}\left(\Delta(\gamma (v)u)+au-b u^\sigma\right)\\
=&a\int_\Omega u-b\int_\Omega u^\sigma-au_*|\Omega|+bu_*\int_\Omega u^{\sigma-1}\\
&\quad-u_*\int_\Omega\gamma (v)\frac{|\nabla u|^2}{u^2}-u_*\int_\Omega\gamma'(v)\frac{\nabla u\cdot\nabla v}{u}\\
=&-b\int_\Omega(u^{\sigma-1}-u_*^{\sigma-1})(u-u_*)\\
&\quad-u_*\int_\Omega\gamma (v)\frac{|\nabla u|^2}{u^2}-u_*\int_\Omega\gamma'(v)\frac{\nabla u\cdot\nabla v}{u}.
\end{aligned}
\end{equation}
Due to $b>b_2$, we have
$$\frac{|\gamma'(v)|^2u_*}{4\gamma(v)}<\frac{bu_*^{\sigma-2}}{K_1K_2},$$
which implies that there exists a constant $\varepsilon>0$ such that
\begin{equation}\label{51}
\frac{|\gamma'(v)|^2u_*}{4\gamma(v)}\leq\varepsilon<\frac{bu_*^{\sigma-2}}{K_1K_2}.
\end{equation}
Combining \eqref{46}, \eqref{30} and \eqref{32}, one gets
\begin{equation*}
\begin{aligned}
&\frac{d}{dt}\int_\Omega\left(u-u_*-u_*\ln\frac{u}{u_*}\right)\\
\leq&-b\int_\Omega(u^{\sigma-1}-u_*^{\sigma-1})(u-u_*)-\varepsilon\int_\Omega(v-\overline{v})^2+\varepsilon K_2\|u-u_*\|_{L^q(\Omega)}^2\\
&\quad-u_*\int_\Omega\gamma (v)\frac{|\nabla u|^2}{u^2}-u_*\int_\Omega\gamma'(v)\frac{\nabla u\cdot\nabla v}{u}-\varepsilon\int_\Omega|\nabla v|^2\\
\leq&-\left(\frac{b u_*^{\sigma-2}}{K_{1}}-\varepsilon K_2\right)\left\|u-u_*\right\|_{L^{q}(\Omega)}^{2}-\varepsilon\int_\Omega(v-\overline{v})^2-\int_\Omega \Sigma^TB\Sigma,
\end{aligned}
\end{equation*}
where
$$\Sigma=\begin{bmatrix}\frac{\nabla u}{u} \\ \nabla v \end{bmatrix},\quad B=\begin{bmatrix} \gamma(v)u_* & \frac{1}{2}\gamma'(v)u_* \\ \frac{1}{2}\gamma'(v)u_* & \varepsilon \end{bmatrix}.$$
As \eqref{51} ensures that $\frac{b u_*^{\sigma-2}}{K_{1}}-\varepsilon K_2$ is  positive,  $B$ and $F(u)$ is non-negative, the results of Lemma \ref{lm15} follow directly.
\end{proof}

Now, we use the argument of contradiction, Lemma \ref{lm13} and Lemma \ref{lm15} to obtain the asymptotic behavior of \eqref{1} and prove Theorem \ref{th3}.

\begin{proof}[Proof of Theorem \ref{th3}]
We use the argument of contradiction to prove the theorem. Supposing that $\|u(\cdot,t)-u_*\|_{L^\infty}\not \rightarrow0$, we can find a constant $C_1>0$, some sequences
$\{t_j\}_{j\in\mathbb{N}}\subset(1,+\infty)$ such that $t_j\rightarrow+\infty$ as $j\rightarrow+\infty$ and
$$\left\|u(\cdot,t_j)-u_*\right\|_{L^\infty}\geq C_1$$
for all $j\in\mathbb{N}$. Without loss of generality, we may assume that $t_{j+1}>t_j+1$ for all $j\in\mathbb{N}$. Since $\{u(\cdot,t_j)-u_*\}_{j\in\mathbb{N}}$ is relatively compact in $C^0(\overline{\Omega})$ according to Lemma \ref{lm13} and the Arzel\`{a}-Ascoli theorem, we may assume that
$$u(\cdot,t_j)-u_*\rightarrow \widetilde{u}\quad\text{in}\quad L^\infty(\Omega)$$
as $j\rightarrow+\infty$ with some nonnegative $\widetilde{u}\in C^0(\overline{\Omega})$, where the subsequence of $\{u(\cdot,t_j)-u_*\}_{j\in\mathbb{N}}$ is denoted by $\{u(\cdot,t_j)-u_*\}_{j\in\mathbb{N}}$ for notational convenience. Hence we have
$$u(\cdot,t_j)-u_*\rightarrow \widetilde{u}\quad\text{in}\quad L^q(\Omega)$$
as $j\rightarrow+\infty$ which implies
$$\left\|u(\cdot,t_j)-u_*\right\|_{L^q(\Omega)}\geq\frac{1}{2}\| \widetilde{u}\|_{L^q(\Omega)}$$
for all sufficient large $j\in\mathbb{N}$. Now using Lemma \ref{lm13}, we easily get the existence of $\tau\in(0,1)$ and $j_0\in\mathbb{N}$ such that
$$\left\|u(\cdot,t)-u_*\right\|_{L^q(\Omega)}\geq\frac{1}{4}\|\widetilde{u}\|_{L^q(\Omega)}$$
for all $t\in[t_j,t_{j}+\tau]$ and each $j\geqslant j_0$ which in particular implies that for any $j\geqslant j_0$ we have
\begin{equation}\label{6.11}
\int_{t_j}^{t_j+\tau}\left\|u(x,t)-u_*\right\|^2_{L^q(\Omega)}\geq\frac{\tau}{16}\|\widetilde{u}\|^2_{L^q(\Omega)}.
\end{equation}
From Lemma \ref{lm15}, we derive
\begin{align*}
\int_{t_j}^{t_{j}+\tau}\left\|u(x,t)-u_*\right\|^2_{L^q(\Omega)}\leq\int_{t_j}^{+\infty}\left\|u(x,t)-u_*\right\|^2_{L^q(\Omega)}\rightarrow0
\end{align*}
as $j\rightarrow+\infty$ which contradicts \eqref{6.11}. Hence, \eqref{1.5} is verified.

The convergence $\|v(\cdot,t)-u_*\|_{L^\infty}\rightarrow0$ as $t \to \infty$ can be directly obtained by the maximum principle applied to the second equation of \eqref{1}.
\end{proof}

\addcontentsline{toc}{section}{Acknowledgments}
\section*{Acknowledgments}
The research of the first author was supported by the National Nature Science Foundation of China (No. 12101377). The research of the second author was supported by the Hong Kong RGC GRF grant No. 15303019 (Project ID P0030816) and an internal grant No. UAH0 (Project ID P0031504) from the Hong Kong Polytechnic University.

\addcontentsline{toc}{section}{References}

\end{document}